\newtheorem{theorem} {{\textsf{Theorem}}}
\newtheorem{proposition}[theorem]{{\textsf{Proposition}}}
\newtheorem{corollary}[theorem]{{\textsf{Corollary}}}
\newtheorem{definition}[theorem]{{\textsf{Definition}}}
\newtheorem{remark}[theorem]{{\textsf{Remark}}}
\newtheorem{example}[theorem]{{\textsf{Example}}}
\newtheorem{lemma}[theorem]{{\textsf{Lemma}}}
\newcommand{\TPSS}{\mathbb{S}^{\hspace{.2mm}1} \mbox{$\times
\hspace{-2.6mm}_{-}$} \, \mathbb{S}^{\hspace{.1mm}1}}
\newcommand{\TPSP}{\mathbb{S}^{\hspace{.2mm}2} \mbox{$\times
\hspace{-2.6mm}_{-}$} \, \mathbb{S}^{\hspace{.1mm}1}}
\begin{document}

\title{Minimal crystallizations of 3-manifolds with boundary}
\author{Biplab Basak$^1$ and Manisha Binjola}

\date{}

\maketitle

\vspace{-15mm}
\begin{center}

\noindent {\small Department of Mathematics, Indian Institute of Technology Delhi, New Delhi 110016, India.$^2$}

%\end{center}

\footnotetext[1]{Corresponding author.}

\footnotetext[2]{{\em E-mail addresses:} \url{biplab@iitd.ac.in} (B.
Basak), \url{binjolamanisha@gmail.com} (M. Binjola).}

\medskip

%\begin{center}
\date{September 18, 2021}
\end{center}

\hrule

\begin{abstract}
 Let  $(\Gamma,\gamma)$ be a crystallization of connected compact 3-manifold $M$ with $h$ boundary components.  Let $\mathcal{G}(M)$ and $\mathit k (M)$ be the regular genus and gem-complexity of $M$ respectively,  and let $\mathcal{G}(\partial M)$ be the  regular genus of $\partial M$. We prove that 
 $$\mathit k (M)\geq 3 (\mathcal{G}(M)+h-1) \geq 3 (\mathcal{G} (\partial M)+h-1).$$
These bounds for gem-complexity of $M$ are sharp for several 3-manifolds with boundary. Further, we show that if $\partial M$ is connected  and $\mathit k (M)< 3 (\mathcal{G} (\partial M)+1)$ then $M$ is a handlebody. In particular, we prove that $\mathit k (M) =3 \mathcal{G} (\partial M)$   if $M$ is a handlebody and  $\mathit k (M) \geq 3 (\mathcal{G} (\partial M)+1)$ if $M$ is not a handlebody. Further, we obtain several combinatorial properties for a crystallization of 3-manifolds with boundary.
\end{abstract}

\noindent {\small {\em MSC 2020\,:} Primary 57Q15. Secondary 05C15; 57K30; 57K31; 57Q05.

\noindent {\em Keywords:} PL-manifolds,  Crystallizations, Regular genus, Gem-complexity, Handlebody.}

\medskip

%\hrule

\section{Introduction}

A crystallization $(\Gamma,\gamma)$ of a connected compact PL $d$-manifold (possibly with boundary) is a certain type of edge colored graph which represents the manifold (for details and related notations we refer Subsection \ref{crystal}). The journey of crystallization theory has begun due to Pezzana who gives the existence of a crystallization  for every closed connected PL $d$-manifold (see \cite{pe74}). Later the existence of a crystallization has been proved for every connected compact PL $d$-manifold with boundary (see \cite{cg80, ga83}). A beautiful proof of the classification of closed surfaces using crystallization theory can be found in \cite{bb17}. In \cite{ga79}, Gagliardi gave a combinatorial characterization of a $4$-colored graph to be a  crystallization of a closed connected $3$-manifold. In \cite{cg80} the authors extended the above result to a connected compact $3$-manifold with connected boundary, and in \cite{ga83}  Gagliardi further extended the result to a connected compact $3$-manifold with several boundary components. The lower bound for the number of vertices of a crystallization can be found in \cite{bd14} for closed connected $3$-manifolds, and in \cite{bc15} for closed connected $4$-manifolds. In this article, we gave a lower bound for the number of vertices of a crystallization for connected compact  $3$-manifolds with several boundary components. 

\smallskip

Extending the notion of genus in dimension 2, the notion of regular genus $\mathcal{G}(M)$ for a closed connected PL $d$-manifold $M$, has been introduced in \cite{ga81}, which is strictly related to the existence of regular
embeddings of crystallizations of the manifold into surfaces (cf. Subsection \ref{sec:genus} for details). Later, in \cite{ga87}, the concept of regular genus has been extended for a connected compact  PL $d$-manifold with boundary, for $d\geq 2$. The regular genus of a closed connected orientable (resp., a non-orientable) surface equals the genus (resp., half of the genus) of the surface. Several classification results according to  the gem-complexity and regular genus can be found in \cite{bb19,bb18, cav99, fg82}.  Let  $M$ be a  connected compact 3-manifold $M$ with boundary, and  let $\mathcal{G}(M)$ and $\mathcal{G}(\partial M)$ be the regular genera of $M$ and  $\partial M$ respectively. Then from \cite{bm87,cp90}, we know that $\mathcal{G}(M) \geq \mathcal{G} (\partial M)$. For $3\leq d \leq 5$, a classification result for a $d$-dimensional manifold with connected boundary can be found in \cite{ca93, ca92, ca90} when the regular genus of the manifold is same as the regular genus of its boundary. %We use this classification result to prove our results.

\smallskip

 The gem-complexity  is another interesting and useful combinatorial invariant for classifying connected compact PL $d$-manifolds $M$, and is defined as the non-negative integer $\mathit{k}(M) = p - 1$, where $2p$ is the minimum number of vertices of a crystallization of $M$.   A catalogue of closed connected 3-manifolds up to gem-complexity 14 can be found in \cite{bcg09,cc08}. A catalogue of PL 4-manifolds by gem-complexity can be found in \cite{cc15}. Such results for manifold with boundary are not very well known. The estimations of Matveev's complexity for 3-manifolds with boundary can be found in \cite{cc13}. In this article, we prove that if $M$ is a connected compact 3-manifold with $h$ boundary components then  $\mathit k (M)\geq 3 (\mathcal{G}(M)+h-1)$ (cf. Theorem \ref{theorem:gem-genus}). This bound is sharp for several $3$-manifolds with boundary (cf. Remark \ref{remark:sharp}). Since $\mathcal{G}(M) \geq \mathcal{G} (\partial M)$, we also have $\mathit k (M) \geq 3 (\mathcal{G} (\partial M)+h-1)$.  Further, we have shown that if $M$ is a $3$-manifold with connected boundary and $\mathit k (M)< 3 (\mathcal{G} (\partial M)+1)$ then $M$ is a handlebody (cf. Theorem \ref{theorem:upperbound}). In particular, we prove that  if $M$ is a handlebody then $\mathit k (M) =3 \mathcal{G} (\partial M)$ and  if $M$ is not a handlebody then $\mathit k (M) \geq 3 (\mathcal{G} (\partial M)+1)$.  We have shown the sharpness of this bound for several $3$-manifolds which are not handlebodies. (cf. Remark \ref{remark:sharp}).

 \smallskip

\section{Preliminaries}

\subsection{Crystallization} \label{crystal}

Crystallization theory is a combinatorial representation tool for piecewise-linear (PL) manifolds of arbitrary dimension. A multigraph is a graph where multiple edges are allowed but loops are forbidden. For a multigraph $\Gamma= (V(\Gamma),E(\Gamma))$, a surjective map $\gamma : E(\Gamma) \to \Delta_d:=\{0,1, \dots , d\}$ is called a proper edge-coloring if $\gamma(e) \ne \gamma(f)$ for any pair $e,f$ of adjacent edges. The elements of the set $\Delta_d$ are called the {\it colors} of $\Gamma$. A graph $(\Gamma,\gamma)$ is called {\it $(d+1)$-regular} if degree of each vertex is $d+1$ and is said to be {\it $(d+1)$-regular with respect to a color $c$} if after removing all the edges of color $c$ from $\Gamma$, the resulting graph is $d$-regular. We refer to \cite{bm08} for standard terminology on graphs. 

\smallskip

A graph $(\Gamma,\gamma)$ is called {\it $(d+1)$-regular colored graph} if $\Gamma$ is a $(d+1)$-regular and $\gamma$ is a proper edge-coloring.  A {\it $(d+1)$-colored graph with boundary} is a pair $(\Gamma,\gamma)$, where $\Gamma$ is a $(d+1)$-regular graph with respect to a color $c\in \Delta_d$ but not $(d+1)$-regular and $\gamma$ is a proper edge-coloring. If $(\Gamma,\gamma)$ is a $(d+1)$-regular colored graph or a $(d+1)$-colored graph with boundary then we simply call $(\Gamma,\gamma)$ as a $(d+1)$-colored graph. For each $B \subseteq \Delta_d$ with $h$ elements, the graph $\Gamma_B =(V(\Gamma), \gamma^{-1}(B))$ is an $h$-colored graph with edge-coloring $\gamma|_{\gamma^{-1}(B)}$. For a color set $\{i_1,i_2,\dots,i_k\} \subset \Delta_d$, $\Gamma_{\{i_1,i_2, \dots, i_k\}}$ denotes the subgraph restricted to the color set  $\{i_1,i_2,\dots,i_k\}$  and $g_{i_1i_2 \dots i_k}$ denotes the number of connected components of the graph $\Gamma_{\{i_1, i_2, \dots, i_k\}}$.  A graph $(\Gamma,\gamma)$ is called {\it contracted} if subgraph $\Gamma_{\hat{c}}:=\Gamma_{\Delta_d\setminus \{c\}}$ is connected for all $c$. 

\smallskip
 
Let $\mathbb{G}_d$ denote the set of graphs $(\Gamma,\gamma)$ which are $(d+1)$-regular with respect to the fixed color $d$. Thus  $\mathbb{G}_d$ contains all the $(d+1)$-regular colored graphs as well as all $(d+1)$-colored graphs with boundary. If $(\Gamma,\gamma)\in \mathbb{G}_d$ then the vertices with degree $d+1$ are called the internal vertices and the vertices with degree $d$ are called the boundary vertices. Let $C_{ij}$ denote the the number of $\{i,j\}$-colored cycles in $\Gamma$. Then $C_{ij}=g_{ij}$ for $i,j\in \Delta_d\setminus \{d\}$.   For each graph $(\Gamma,\gamma) \in \mathbb{G}_d$,  we define its  boundary graph $(\partial \Gamma,\partial \gamma)$ as follows:

\begin{itemize}
\item{} there is a bijection between $V(\partial \Gamma)$ and the set of boundary vertices of $\Gamma$;

\item{} $u_1,u_2 \in V(\partial \Gamma)$ are joined in $\partial \Gamma$ by an edge of color $j$ if and only if $u_1$ and $u_2$ are joined in $\Gamma$  by a path formed by $j$ and $d$ colored edges alternatively.
\end{itemize} 

\smallskip

Note that, if $(\Gamma,\gamma)$ is $(d+1)$-regular then $(\Gamma,\gamma)\in \mathbb{G}_d$ and $\partial \Gamma = \emptyset$. For each  $(\Gamma,\gamma) \in \mathbb{G}_d$, a corresponding $d$-dimensional simplicial cell-complex ${\mathcal K}(\Gamma)$ is determined as follows:

\begin{itemize}
\item{} for each vertex $u\in V(\Gamma)$, take a $d$-simplex $\sigma(u)$ and label its vertices by $\Delta_d$;

\item{} corresponding to each edge of color $j$ between $u,v\in V(\Gamma)$, identify the ($d-1$)-faces of $\sigma(u)$ and $\sigma(v)$ opposite to $j$-labeled vertices such that the vertices with same label coincide.
\end{itemize}

We refer to \cite{bj84} for CW-complexes and related notions. We say  $(\Gamma,\gamma)$ {\it represents} connected compact PL $d$-manifold $M$ (possibly with boundary) if the geometrical carrier $|{\mathcal K}(\Gamma)|$ is PL homeomorphic to $M$. It is not hard to see that  $|{\mathcal K}(\Gamma)|$ is orientable if and only if $\Gamma$ is a bipartite graph. If  $(\Gamma,\gamma)\in \mathbb{G}_d$ represents a connected compact  PL $d$-manifold with boundary then we can define its boundary graph $(\partial \Gamma,\partial \gamma)$, and each component of the boundary-graph $(\partial \Gamma,\partial \gamma)$ represents a component of $\partial M$.  From the construction it is easy to see that, for $\mathcal{B} \subset \Delta_d$ of cardinality $k+1$, ${\mathcal K}(\Gamma)$ has as many $k$-simplices with vertices labeled by $\mathcal{B}$ as many connected components of $\Gamma_{\Delta_d \setminus \mathcal{B}}$ are (cf. \cite{fgg86})

\smallskip

 Let $(\Gamma,\gamma)\in \mathbb{G}_d$ represent a connected compact PL  $d$-manifold with $h$ boundary components, then  $\mathcal{K}(\Gamma)$ has at least $d \cdot \max\{1,h\}+1$ vertices, for $h\geq 0$. For $h\geq 1$, it is easy to see that $\Gamma_{\hat d}$ is connected and each component of $\partial \Gamma$ is contracted if and only if $\mathcal{K}(\Gamma)$ has exactly $dh+1$ vertices.
 
\begin{definition}[\cite{ga83}]
Let  $(\Gamma,\gamma)  \in \mathbb{G}_d$ be a connected graph such that $\partial \Gamma$ has $h$ components, for $h\geq 1$. Then $(\Gamma,\gamma)$ is called $\partial$-contracted if $(a)$ $\Gamma_{\hat{d}}$ is connected, and $(b)$ for every $0\leq c \leq d-1$, $\Gamma_{\hat{c}}$ has $h$ components.
\end{definition}

A connected graph  $(\Gamma,\gamma) \in \mathbb{G}_d$ is said to be a {\em crystallization} of a connected compact PL $d$-manifold $M$ with $h$ boundary components if  $\mathcal{K}(\Gamma)$ has exactly  $d \cdot \max\{1,h\}+1$ vertices, for $h\geq 0$. In other words, a connected graph $(\Gamma,\gamma) \in \mathbb{G}_d$ is  a crystallization of a manifold $M$ with (non-empty) boundary if $(\Gamma,\gamma)$ is $\partial$-contracted. Note that, if $\partial M$ is connected (resp., empty) then $(\Gamma,\gamma)$ is contracted.

\smallskip

The starting point of the whole crystallization theory is the following Pezzana's Existence Theorem (cf. \cite{pe74}).

\begin{proposition}
 Every closed connected PL $d$-manifold admits a crystallization.
\end{proposition}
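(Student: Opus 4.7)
The plan is the classical three-step construction: triangulate the manifold, dualize with a canonical coloring, and reduce via dipole cancellations. Since $M$ is a closed connected PL $d$-manifold it carries a simplicial triangulation $T$. Pass to its first barycentric subdivision $T'$: every vertex of $T'$ is the barycenter of a unique simplex $\sigma$ of $T$, so labelling it with $\dim \sigma \in \Delta_d$ yields a vertex-coloring in which each $d$-simplex of $T'$ carries all $d+1$ labels exactly once.

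Form the dual graph $(\Gamma,\gamma)$: vertices are the $d$-simplices of $T'$, and two are joined by an edge of color $c$ whenever the corresponding simplices share the $(d-1)$-face opposite their $c$-labelled vertices. Because $M$ is a closed $d$-manifold, every $(d-1)$-face of $T'$ is shared by exactly two $d$-simplices, so $\Gamma$ is $(d+1)$-regular; the vertex-coloring of $T'$ makes $\gamma$ a proper edge-coloring. The reconstruction procedure for $\mathcal{K}(\Gamma)$ recorded in the preliminaries returns precisely $T'$, so $(\Gamma,\gamma) \in \mathbb{G}_d$ represents $M$. It need not yet be contracted.

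The reduction uses the \emph{1-dipole elimination move}: if $u,v \in V(\Gamma)$ are joined by an edge of color $c$ and lie in distinct components of $\Gamma_{\hat c}$, delete both vertices and, for each $c' \neq c$, concatenate the two $c'$-edges previously incident to $u$ and $v$ into one. A direct check shows this returns another $(d+1)$-regular colored graph representing the same manifold, reduces the number of components of $\Gamma_{\hat c}$ by one, and leaves the counts for the other colors unchanged. Granting the existence lemma below, iterated application halts at a graph with $\Gamma_{\hat c}$ connected for every $c$, i.e., a crystallization of $M$.

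The main obstacle is the existence lemma: whenever $\Gamma_{\hat c}$ is disconnected, a $1$-dipole of color $c$ exists in $\Gamma$. I would prove it by picking two $c$-labelled vertices $x_1, x_2$ of $\mathcal{K}(\Gamma)$ lying in different components of $\Gamma_{\hat c}$, joining them by an edge-path in the $1$-skeleton of $|\mathcal{K}(\Gamma)| \cong M$, and lifting this path to a chain $\sigma_0,\sigma_1,\dots,\sigma_r$ of $d$-simplices with consecutive members sharing a $(d-1)$-face. Along the chain the $c$-labelled vertex must change at some step $i$; the dual vertices of $\sigma_{i-1}$ and $\sigma_i$ then form a pair joined by a $c$-edge yet lying in different components of $\Gamma_{\hat c}$, which is the required dipole. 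Termination of the full reduction is automatic since $|V(\Gamma)|$ strictly decreases at each move.
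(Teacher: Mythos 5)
The paper does not prove this statement at all: it is Pezzana's classical existence theorem, simply quoted from \cite{pe74}. Your argument is the standard modern proof of it --- dualize the barycentric subdivision of a triangulation to obtain a $(d+1)$-colored graph representing $M$, then contract by cancelling $1$-dipoles --- so the route is the right one and the overall structure is sound. Two steps deserve more honesty, though. First, the claim that a $1$-dipole cancellation ``returns another $(d+1)$-regular colored graph representing the same manifold'' is the real content of the reduction and is not a direct check: it is the invariance of $|\mathcal{K}(\Gamma)|$ under proper dipole moves (Ferri--Gagliardi \cite{fg82c}, which this paper itself cites for dipoles). The proof uses that the dipole condition forces $x$ and $y$ to be joined by exactly one edge (a second edge of another color would put them in the same component of $\Gamma_{\hat{c}}$), so $\sigma(x)\cup\sigma(y)$ is a $d$-ball glued along a single facet, and that squashing this ball, i.e.\ identifying the two \emph{distinct} $c$-labelled vertices, is a PL homeomorphism of the carrier; if you may not quote \cite{fg82c}, this needs an actual argument. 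Second, your existence lemma is true but is obtained more simply and more robustly by a purely graph-theoretic remark: $\Gamma$ is connected (the initial dual graph is connected because a connected closed PL manifold is strongly connected, and dipole cancellations preserve connectedness --- a point you should state, since a crystallization is required to be connected), and every non-$c$-colored edge joins vertices in the same component of $\Gamma_{\hat{c}}$, so any path of $\Gamma$ between two different components of $\Gamma_{\hat{c}}$ must contain a $c$-colored edge whose endpoints lie in different components, which is the desired $1$-dipole. Your path-lifting argument in the $1$-skeleton is fine for the first graph, where $\mathcal{K}(\Gamma)=T'$ is a genuine simplicial complex, but after cancellations $\mathcal{K}(\Gamma)$ is only a simplicial cell complex, so both the lifting and the ``opposite vertices are distinct'' step would have to be re-justified at every iteration; the graph argument sidesteps this entirely.
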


Pezzana's existence theorem has been extended to the boundary case (cf \cite{cg80, ga83}). %Let $(\Gamma,\gamma)$ and $(\Gamma',\gamma')$   be two $(d+1)$-colored graphs with color set $\Delta_d$ and $\Delta'_d$. Then $I(\Gamma):=(I_V,I_c):\Gamma \to \Gamma'$ is called an {\em isomorphism} if $I_V: V(\Gamma) \to V(\Gamma')$ and $I_c:\Delta_d \to \Delta'_d$ are bijective maps such that $uv$ is an edge of color $i \in \Delta_d$ if and only if $I_V(u)I_V(v)$ is an edge of color $I_c(i) \in \Delta'_d$. In this case, we say $(\Gamma,\gamma)$ and $(\Gamma',\gamma')$ are isomorphic.

\begin{proposition}[\cite{cg80, ga83}]
Let $M$ be a connected compact PL $d$-manifold with (possibly non-connected) boundary. For every crystallization $(\Gamma',\gamma')$ of $\partial M$, there exists a crystallization $(\Gamma,\gamma)$ of $M$, whose boundary graph $(\partial \Gamma,\partial \gamma)$ is isomorphic with $(\Gamma',\gamma')$.
\end{proposition}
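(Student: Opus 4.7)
The plan is to start from any PL triangulation of $M$ whose boundary subcomplex is $\mathcal{K}(\Gamma')$, pass to its dual edge-colored graph, and then reduce it to a $\partial$-contracted graph by a sequence of Pezzana-type contractions performed only between internal vertices.

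First, I would invoke the standard PL fact that the given PL triangulation of $\partial M$ underlying $\mathcal{K}(\Gamma')$ extends to a PL triangulation $K$ of $M$ in which $\mathcal{K}(\Gamma')$ appears as the boundary subcomplex. After a sufficient interior refinement (for instance, by a barycentric subdivision restricted to the interior so that the $d$-simplices admit a proper labelling of their vertices by $\Delta_d$), the dual construction yields a graph $(\Gamma_0,\gamma_0) \in \mathbb{G}_d$ representing $M$ whose boundary graph $(\partial \Gamma_0, \partial \gamma_0)$ is isomorphic to $(\Gamma',\gamma')$: the boundary vertices of $\Gamma_0$ correspond exactly to the boundary $d$-simplices of $K$, and the alternating $\{c,d\}$-paths of $\Gamma_0$ between two such boundary vertices reproduce the color-$c$ edges of $\Gamma'$.

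Second, I would iteratively apply Pezzana's dipole cancellation to internal edges. Recall that if two vertices $u, v$ are joined by an edge of color $c$ and lie in distinct connected components of $\Gamma_{\hat c}$, then cancelling this dipole yields an edge-colored graph that still represents $M$ and has two fewer vertices. Restricting these cancellations to edges both of whose endpoints are internal, the degree-$d$ vertices of $\Gamma_0$ and their mutual alternating paths are untouched, so $(\partial \Gamma_0, \partial \gamma_0)$ is preserved. I would first treat each color $c \in \{0,1,\dots,d-1\}$ and contract internal edges of color $c$ until $\Gamma_{\hat c}$ has exactly $h$ components (the minimum allowed by the $h$ boundary components), and then treat color $d$ by contracting internal edges of color $d$ until $\Gamma_{\hat d}$ is connected. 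By the vertex-count remark recalled just before the definition of $\partial$-contracted, the resulting $(\Gamma,\gamma)$ has exactly $d h + 1$ vertices in $\mathcal{K}(\Gamma)$ and so is a crystallization of $M$ with boundary graph isomorphic to $(\Gamma',\gamma')$.

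The main obstacle is guaranteeing that every required contraction can in fact be carried out on an internal edge, and is never forced onto an edge incident with a boundary vertex. A pathological scenario is one in which $\Gamma_{\hat c}$ has too many components but every color-$c$ edge joining distinct such components has a boundary endpoint. I would rule this out by first performing additional internal stellar/barycentric subdivisions of $d$-simplices of $K$ that lie in the interior of $M$; this introduces fresh internal vertices and new internal edges of every color while leaving $\partial K$ untouched, creating as much interior room as needed. With this preparation, whenever $\Gamma_{\hat c}$ has more than $\max\{1,h\}$ components there is always an internal color-$c$ edge bridging two of them available for contraction, so the reduction terminates in the $\partial$-contracted state and the theorem follows.
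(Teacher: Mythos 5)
This proposition is quoted in the paper from \cite{cg80, ga83} without proof, so there is no in-paper argument to compare with; the cited sources obtain the result by explicit coloured constructions (in \cite{ga83} via cobordant crystallizations: one builds a gem of $M$ that is forced to induce the prescribed boundary crystallization, and then cancels only dipoles that provably leave the boundary graph untouched). Measured against that, your first step hides the actual content of the theorem. To dualize a triangulation into a graph of $\mathbb{G}_d$ you need a simplicial cell decomposition of $M$ whose vertices carry a proper labelling by $\Delta_d$ \emph{and} whose boundary subcomplex is exactly $\mathcal{K}(\Gamma')$ with its labelling. A ``barycentric subdivision restricted to the interior'' does not produce such a labelling: the colouring-by-dimension trick needs every simplex, including the boundary ones, to be subdivided, which destroys $\mathcal{K}(\Gamma')$; and $\mathcal{K}(\Gamma')$ is a pseudocomplex with only $d$ vertices per boundary component, so the ``standard PL fact'' about extending triangulations does not apply verbatim either. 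The later repair by ``extra interior stellar subdivisions'' is likewise incompatible with a proper $(d+1)$-colouring, since a stellar subdivision inserts a vertex adjacent to all $d+1$ labels. Producing the initial $(\Gamma_0,\gamma_0)$ with $\partial\Gamma_0\cong\Gamma'$ is precisely what \cite{cg80, ga83} have to work for, and your proposal presupposes it.

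The second step also uses the wrong admissibility condition for dipoles. Cancelling a $1$-dipole of colour $c$ whose two endpoints are internal vertices does \emph{not} in general preserve the boundary graph, nor even the manifold: the relevant condition is not on the vertices $x,y$ but on the residues, namely that at least one of $\Gamma_{\hat c}(x)$, $\Gamma_{\hat c}(y)$ contains no boundary vertex (equivalently, the corresponding $c$-labelled vertex of $\mathcal{K}(\Gamma)$ lies in the interior of $M$). If both components meet $\partial\Gamma$, the welding re-pairs the endpoints of the $\{c',d\}$-coloured walks for $c'\neq c$, so $\partial\Gamma$ changes, and the move can change $M$ itself; this is exactly the mechanism used in Corollary \ref{cor:gen-h-bdry} of the paper, where cancelling such $1$-dipoles merges distinct boundary components and replaces $M$ by a different manifold $M'$. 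With the correct residue condition the reduction can be organized (components of $\Gamma_{\hat c}$ disjoint from the boundary can be absorbed, and colour-$d$ dipoles are harmless), but your argument as written licenses moves that destroy the very property you are trying to preserve, and your termination claim (``there is always an internal colour-$c$ edge bridging two components'') is not the condition that is actually needed.
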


It is known that a connected compact PL $d$-manifold with boundary can always be represented by a $(d+1)$-colored graph $(\Gamma,\gamma)$ which is regular with respect to a fixed color $k$, for some $k\in \Delta_{d}$. Without loss of generality, we can assume that $k=d$, i.e., $(\Gamma,\gamma) \in \mathbb{G}_d$. 

In \cite{ga79}, Gagliardi gave a combinatorial characterization of a contracted $4$-colored graph to be a  crystallization of a closed connected $3$-manifold. In \cite{ga83}, Gagliardi extended the above theorem to connected compact $3$-manifold with several boundary components. For graph $(\Gamma,\gamma)$ with boundary,  let $2p$ and $2\overline{p}$ denote the number of vertices and boundary vertices of $(\Gamma,\gamma)$ respectively.

\begin{proposition}[\cite{ga83}]\label{prop:boundary}
A  $4$-colored graph with boundary $(\Gamma,\gamma)$ is a crystallization of a connected compact $3$-manifold $M$ with $h$ boundary components ($h\geq 1$) if and only if the following conditions hold.
\begin{enumerate}[$(i)$]
\item $(\Gamma,\gamma)$ is connected, $\partial$-contracted element of $\mathbb{G}_3$, and $\partial \Gamma$ has $h$ components. 
\item $g_{03}-g_{12}=g_{13}-g_{02}=g_{23}-g_{01}=\frac{\bar p}{2}+\frac{h}{2} -1.$
\item $g_{0 1}+ g_{02}+g_{12}=2+p.$
\end{enumerate}
\end{proposition}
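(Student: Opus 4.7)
The statement is an ``if and only if'', and both directions reduce to the claim that $\mathcal{K}(\Gamma)$ is a PL $3$-manifold with exactly $3h+1$ vertices. Condition $(iii)$ will turn out to be the Euler identity $\chi = 2$ for the link of the unique interior label-$3$ vertex, while condition $(ii)$ encodes the aggregate Euler identity $\chi = h$ for the $h$ disk-links of each of the labels $0,1,2$.

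For necessity, condition $(i)$ is essentially built into the definition of a crystallization together with $\partial$-contractedness, and it fixes the vertex count of $\mathcal{K}(\Gamma)$: one label-$3$ vertex and $h$ vertices of each label $c\in\{0,1,2\}$. The link of the unique label-$3$ vertex is $\mathcal{K}(\Gamma_{012})$, which is connected by $(i)$ and closed since $\Gamma_{012}$ is $3$-regular; forcing this surface to be $S^{2}$ and counting $V = g_{01}+g_{02}+g_{12}$, $E = 3p$ (three color-classes, each a perfect matching on $2p$ vertices), and $F = 2p$ triangles yields $(iii)$. For each $c\in\{0,1,2\}$ the $h$ label-$c$ vertices are boundary vertices with $2$-disk links, so the $h$ links have total $\chi = h$. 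Summing $V$, $E$, $F$ over the $h$ components of $\Gamma_{\hat c}$ gives $V$-sum $= \sum g_{jk}$ over the three pairs $\{j,k\}$ in $\{0,1,2,3\}\setminus\{c\}$, $F$-sum $= 2p$, and $E$-sum $= 3p+\bar p$; the extra $\bar p$ arises because the ``color-$3$-only'' subgraph contains $p-\bar p$ color-$3$ edges plus $2\bar p$ isolated boundary vertices. This produces one equation per $c$. Subtracting $(iii)$ converts them into the symmetric system $a+b = b+c = a+c = \bar p + h - 2$ in the variables $a = g_{03}-g_{12}$, $b = g_{13}-g_{02}$, $c = g_{23}-g_{01}$, uniquely solved by $a = b = c = \bar p/2 + h/2 - 1$, which is condition $(ii)$.

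For sufficiency, assume $(i)$, $(ii)$, $(iii)$ and verify every vertex of $\mathcal{K}(\Gamma)$ has the correct link. The label-$3$ link is a connected closed surface with $\chi = 2$, hence $S^{2}$. The boundary graph $\partial\Gamma$ is a $3$-regular $3$-colored graph, so it automatically represents a closed $2$-manifold, and by $(i)$ it has the right number of components. The delicate step --- and the main obstacle --- is to promote the aggregate identity $(ii)$ to a per-vertex statement: for each label-$c$ vertex ($c\in\{0,1,2\}$) individually I need the link to be a $2$-disk, not merely to contribute to the correct $\chi$-sum. My approach would be to match components of $\Gamma_{\hat c}$ canonically with components of $\partial\Gamma$ via the boundary vertices they share, verify that for each such component $L$ the complex $\mathcal{K}(L)$ is a connected surface whose boundary is the link of the corresponding vertex in the matched boundary component, and then split the Euler identity $(ii)$ along this matching to extract $\chi(\mathcal{K}(L)) = 1$ for each individual $L$; a connected surface with nonempty boundary and $\chi = 1$ is a disk, completing the argument. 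The component-wise Euler bookkeeping, together with the canonical matching between $\Gamma_{\hat c}$-components and $\partial\Gamma$-components, is the step I expect to require the most care.
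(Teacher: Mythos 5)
Note first that the paper does not prove Proposition \ref{prop:boundary} at all: it is quoted from Gagliardi \cite{ga83}, so your attempt can only be compared with the standard argument there, not with an internal proof. Your necessity direction is essentially that standard argument and its arithmetic checks out (the $V$-, $E$-, $F$-sums, including the $E$-sum $3p+\bar p$, and the linear system $a+b=b+c=a+c=\bar p+h-2$ do give $(ii)$ after substituting $(iii)$). Two small points should be made explicit: why the label-$3$ vertex is interior (free triangles carry labels $\{0,1,2\}$ only), and why \emph{every} label-$c$ vertex, $c\in\{0,1,2\}$, is a boundary vertex with disk link --- each of the $h$ boundary surfaces must contain at least one vertex of each label $0,1,2$, while $\partial$-contractedness provides exactly $3h$ such vertices in total, so each boundary surface contains exactly one of each and none is left in the interior.

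The genuine gap is in the sufficiency direction, exactly at the step you flag, and your proposed repair does not work as stated. (1) The ``canonical matching'' between components of $\Gamma_{\hat{c}}$ and components of $\partial \Gamma$ is not well defined under $(i)$ alone: a $c$-colored edge of $\partial\Gamma$ corresponds to a $\{c,3\}$-colored path of $\Gamma$, which can leave a component of $\Gamma_{\hat{c}}$, so one component of $\partial\Gamma$ may meet several components of $\Gamma_{\hat{c}}$; what is canonically attached to a component $L$ of $\Gamma_{\hat{c}}$ are the $\{i,j\}$-colored cycles of $\partial\Gamma$ it contains ($\{i,j\}=\{0,1,2\}\setminus\{c\}$), and their number is $\partial g_{ij}$, not $h$, at this stage; moreover nothing yet prevents some component of $\Gamma_{\hat{c}}$ from containing no boundary vertex at all. (2) Even granted a matching, ``splitting'' an aggregate Euler identity cannot yield per-component values. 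The correct localization device is a per-term upper bound: each $\mathcal{K}(L)$ is a compact connected surface, so $\chi(\mathcal{K}(L))\le 2$, and $\chi(\mathcal{K}(L))\le 1$ when $L$ meets the boundary vertices; if one knew that every component of $\Gamma_{\hat{c}}$ meets the boundary, then $\sum_L \chi(\mathcal{K}(L))=h$ over $h$ components each of $\chi\le 1$ would force every term to equal $1$, hence every link to be a disk. Without that input the identity $\sum_L\chi(\mathcal{K}(L))=h$ is also satisfied, for instance, by one sphere component, one annulus component and $h-2$ disk components, a configuration your conditions as used so far do not exclude and which would not give a manifold. So the missing ingredient is a proof that under $(i)$--$(iii)$ no component of $\Gamma_{\hat{c}}$, $c\le 2$, is disjoint from the boundary vertices (equivalently, a control of $\partial g_{ij}$ and of how boundary circles distribute among these components), or some alternative global argument as in \cite{ga83}; until that is supplied, the ``if'' direction is not established.
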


A new invariant `gem-complexity' has been defined. Given a connected compact  PL $d$-manifold $M$, its {\em gem-complexity} is the non-negative integer $\mathit{k}(M) = p - 1$, where $2p$ is the minimum number of vertices of a crystallization of $M$.

Let $(\Gamma_1,\gamma_1)$ and $(\Gamma_2,\gamma_2)$ be two disjoint $(d+1)$-colored graphs (possibly with boundary) with the same color set $\Delta_d$, and let $v_i \in V(\Gamma_i)$ ($1\leq i\leq 2$). The {\em connected sum} $(\Gamma_1\#_{v_1v_2}\Gamma_2,\gamma_1\#_{v_1v_2}\gamma_2)$ of $\Gamma_1$, $\Gamma_2$  with respect to vertices $v_1, v_2$  is the graph obtained from $(\Gamma_1 \setminus\{v_1\}) \sqcup (\Gamma_2 \setminus \{v_2\})$ by adding $d+1$ new edges $e_0, \dots, e_d$ with colors $0, \dots, d$ respectively, such that the end points of $e_j$ are $u_{j,1}$ and $u_{j, 2}$, where $v_i$ and $u_{j,i}$ are joined in $(\Gamma_i,\gamma_i)$ with an edge of color $j$ for $0\leq j\leq d$, $1\leq i\leq 2$. From the construction it is clear that $\mathcal K (\Gamma_1\#_{v_1v_2}\Gamma_2)$ is obtained from $\mathcal K (\Gamma_1)$ and $\mathcal K (\Gamma_2)$ by removing the $d$-simplices $\sigma(v_1)$ and $\sigma(v_2)$ and pasting together all $(\Delta_d \setminus \{j\})$-colored  $(d-1)$-simplices of $\sigma(u_{j,1})$ and $\sigma(u_{j,2})$ for $0\leq j \leq d$. For $0\leq j \leq d$, if the $j$-colored vertex of $\sigma(v_1)$ or $\sigma(v_2)$ is not a boundary vertex of $\mathcal K (\Gamma_1)$ or $\mathcal K (\Gamma_2)$ respectively then $(\Gamma_1\#_{v_1v_2}\Gamma_2,\gamma_1\#_{v_1v_2}\gamma_2)$ represents $|\mathcal K (\Gamma_1)| \# |\mathcal K (\Gamma_2)|$. 

\subsection{Regular Genus of PL $d$-manifolds (possibly with boundary)}\label{sec:genus}
Let $(\Gamma,\gamma) \in \mathbb{G}_d$ be a $(d+1)$-colored graph which represents a connected compact $d$-manifold $M$ (possibly with boundary $\partial M$). For each boundary vertex $u$ (possibly empty), a new vertex $u'$ and a new $d$-colored edge is added between $u$ and $u'$. In this way a new graph $(\Gamma',\gamma')$ is obtained. If $\Gamma$ has no boundary vertex then $\Gamma'$ is same as $\Gamma$.  Let $2p$ and $2\overline{p}$ denote the number of vertices and boundary vertices of $(\Gamma,\gamma)$ respectively. Then the number of interior vertices is $2\dot{p}:=2p-2\overline{p}$

Now, given any cyclic permutation $\varepsilon = (\varepsilon _0,\varepsilon _1,\dots,\varepsilon_d = d)$ of the color set $\Delta_d$, a regular imbedding of $\Gamma'$ into a surface $F$ is simply an imbedding $i:|\Gamma'|\to F$ such that the vertices of $i(\Gamma')\cap\partial F$  are the images of the new added vertices and the regions of the imbedding are  bounded by either a cycle (internal region)  or by a walk (boundary region) of $\Gamma'$ with $\varepsilon_i,\varepsilon_{i+1}(i$ mod $d+1)$ colored edges alternatively. 

Using Gross `voltage theory' (see \cite{gro74}), in the bipartite case, and Stahl `embedding schemes' (see \cite{st78}), in the non bipartite case, one can prove that
 for every cyclic permutation $\varepsilon$ of $\Delta_d$, a regular embedding $i_\varepsilon : \Gamma' \hookrightarrow F_\varepsilon$ exists, where orientable (resp., non-orientable) surface $F_\varepsilon$ is of Euler characteristic 
\begin{eqnarray}\label{relation_chi}
\chi_{\varepsilon}(\Gamma)= \sum_{i \in \mathbb{Z}_{d+1}}C_{\varepsilon_i \varepsilon_{i+1}} + (1-d) \dot{p} + (2-d)  \overline{p}
\end{eqnarray}
and $\lambda_\varepsilon=\partial g_{\varepsilon_0 \varepsilon_{d-1}}$ holes where $\partial g_{ij}$ denotes the number of $\{i,j\}$-colored cycles of $\partial\Gamma$. For more details we refer \cite{bcg01, ga87}.

\smallskip

In the orientable (resp., non-orientable) case, the integer
$$\rho_{\varepsilon}(\Gamma) = 1 - \chi_{\varepsilon}(\Gamma)/2-\lambda_\varepsilon/2$$
is equal to the genus (resp,. half of the genus) of the surface $F_{\varepsilon}$.
Then, the \textit{regular genus of $(\Gamma,\gamma)$} denoted by $\rho (\Gamma)$ and the \textit{regular genus of $M$} denoted by $\mathcal G (M)$ are defined as follows (cf. \cite{ga81}):

$$\rho(\Gamma)= \min \{\rho_{\varepsilon}(\Gamma) \ | \  \varepsilon =(\varepsilon_0,\varepsilon_1,\dots,\varepsilon_d = d)\ \text{ is a cyclic permutation of } \ \Delta_d\};$$
$$\mathcal G(M) = \min \{\rho(\Gamma) \ | \  (\Gamma,\gamma) \mbox{ represents } M\}.$$

In dimension two, it is easy to see that if $(\Gamma,\gamma)$ represents a surface $F$, then the corresponding $(\Gamma',\gamma')$ regularly imbeds into $F$ itself. Hence, for each surface $F$,

\begin{align*}
\mathcal G(F)=
\left\{ \begin{array}{lcl}
genus(F)  & \mbox{if} & F \mbox{ is orientable}, \\
\frac{1}{2} \times genus(F) & \mbox{if} & F \mbox{ is non-orientable}.
\end{array}\right. \nonumber
\end{align*}
If $M$ is a connected compact  $3$-manifold with $h$ boundary components say $\partial^1M,\dots, \partial^hM$, then $\mathcal G(\partial M)$ is defined as $\sum_{i=1}^h \mathcal G(\partial^i M)$. Further, from \cite{ca93, ca92, ca90}, we have the following result.

\begin{proposition}\label{prop:handlebody}
For $3\leq d \leq 5$, let $M$ be a $d$-dimensional (orientable or nonorientable) manifold with connected boundary $\partial M$. Then the regular genus of $M$ is equal to the regular genus of the boundary $\partial M$ (say, $g$) if and only if  $M$ is a $d$-dimensional genus $g$ handlebody.
\end{proposition}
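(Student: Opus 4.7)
The plan is to prove the two implications separately. For the ``if'' direction, I would exhibit, for each $d \in \{3,4,5\}$ and each $g \geq 0$, a crystallization $(\Gamma,\gamma)$ of the genus-$g$ handlebody $H_g^d$ whose regular genus is exactly $g$. In dimension $3$, $H_g^3$ admits a spine that is a wedge of $g$ circles, and this spine yields a natural $\partial$-contracted $4$-colored graph; using the formula \eqref{relation_chi} together with the counts $g_{ij}$ supplied by Proposition \ref{prop:boundary}, one checks $\rho_\varepsilon(\Gamma) = g$ for a suitable cyclic permutation $\varepsilon$. For $d = 4, 5$, the analogous construction uses the fact that $H_g^d$ is a boundary connected sum of $g$ copies of $\mathbb{S}^{1} \times B^{d-1}$ (or its non-orientable analogue). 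Combined with the inequality $\mathcal G(M) \geq \mathcal G(\partial M)$ from \cite{bm87,cp90}, this gives $\mathcal G(H_g^d) = g$.

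For the ``only if'' direction, let $(\Gamma,\gamma)$ be a crystallization of $M$ with $\rho(\Gamma) = \mathcal G(M) = \mathcal G(\partial M) = g$, achieved by a cyclic permutation $\varepsilon = (\varepsilon_0,\ldots,\varepsilon_d = d)$. The associated regular embedding $i_\varepsilon : \Gamma' \hookrightarrow F_\varepsilon$ restricts, via its boundary regions, to a regular embedding of $\partial \Gamma$ into a closed surface whose genus is at least $\mathcal G(\partial M) = g$. Since the genus of $F_\varepsilon$ is already $g$, every ``interior'' $\{\varepsilon_i,\varepsilon_{i+1}\}$-region must be trivial in a precise combinatorial sense: the interior $\{\varepsilon_i,\varepsilon_{i+1}\}$-cycles of $\Gamma$ essentially duplicate those of $\partial \Gamma$. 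Translating back through $\mathcal K(\Gamma)$, the interior simplices assemble into a collar of $\partial M$ with only $1$-handles attached, identifying $M$ as a genus-$g$ handlebody.

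The main obstacle is exactly this last translation, from combinatorial minimality to a geometric handle decomposition. In dimension $3$, Proposition \ref{prop:boundary} tightly controls the differences $g_{ij} - g_{kl}$ in terms of $\bar p$, and the equality $\rho(\Gamma) = \rho(\partial \Gamma)$ collapses these relations so that $\mathcal K(\Gamma)$ collapses onto a $1$-dimensional spine, forcing the handlebody conclusion. In dimensions $4$ and $5$ no such clean combinatorial characterisation of crystallizations is available, and one must instead analyse the presentation of $\pi_1(M)$ read off from the regions of $i_\varepsilon$ and show that the inclusion $\partial M \hookrightarrow M$ induces a surjection onto a free $\pi_1(M)$ of rank at most $g$; this is presumably why the result is proved independently for each dimension in \cite{ca90, ca92, ca93}.
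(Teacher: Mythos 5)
The paper does not prove this proposition at all: it is quoted as a known result from \cite{ca90, ca92, ca93}, where Casali establishes the characterization separately in dimensions $3$, $4$ and $5$ by lengthy combinatorial analyses of crystallizations. So the question is whether your sketch could stand on its own, and it cannot: the ``only if'' direction, which is the entire content of the statement, is asserted rather than proved. The claims that every interior $\{\varepsilon_i,\varepsilon_{i+1}\}$-region ``must be trivial in a precise combinatorial sense'' and that ``the interior simplices assemble into a collar of $\partial M$ with only $1$-handles attached'' are exactly the conclusion to be reached, and nothing in your argument extracts them from the single numerical equality $\rho_\varepsilon(\Gamma)=\mathcal G(\partial M)=g$. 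Even in dimension $3$, where Proposition \ref{prop:boundary} does control the $g_{ij}$, equality of genera does not visibly force $\mathcal K(\Gamma)$ to collapse onto a $1$-dimensional spine; converting the equality into combinatorial moves that reduce $\Gamma$ to a standard crystallization of a handlebody is precisely the work carried out in \cite{ca90}.

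The fallback you suggest for $d=4,5$ --- reading a presentation of $\pi_1(M)$ from the regions of $i_\varepsilon$ and showing that $\pi_1(\partial M)\to\pi_1(M)$ surjects onto a free group of rank at most $g$ --- is also insufficient as stated: such fundamental-group conditions do not characterize handlebodies in dimensions $4$ and $5$ (and even in dimension $3$ one needs a genuine topological recognition theorem on top of the group computation, with extra care in the non-orientable case), and you give no derivation of the surjectivity or freeness from the genus hypothesis. The ``if'' direction of your plan is fine in outline: explicit crystallizations of handlebodies with $\rho=g$ exist (Figure \ref{fig:1} realizes $d=3$), and together with $\mathcal G(M)\geq\mathcal G(\partial M)$ from \cite{bm87,cp90} this gives $\mathcal G(M)=g$ for a genus-$g$ handlebody. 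But overall the proposal is a construction for the easy half and a placeholder for the hard half, which is why the paper simply cites \cite{ca90, ca92, ca93} here.
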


\subsection{3-dimensional handlebodies}
Let $M$ be a connected compact  3-manifold with boundary. Then each component of the boundary $\partial M$ of $M$ is a closed connected surface.
% From the classification of closed connected surfaces we know the following result (we refer \cite{bb17} for a proof using the crystallization theory).
%
%\begin{proposition}\label{classification}
%If $F$ is a closed connected surface then $F$ is homeomorphic to $\mathbb{S}^2$, $\#_g (\mathbb{S}^1 \times \mathbb{S}^1)$ for some $g\geq 1$ or $\#_h \mathbb{RP}^2$ for some $h \geq 1$.
%\end{proposition}
%
A handlebody can be defined as the simplest 3-manifold with connected boundary - in the sense that it contains pairwise disjoint, properly embedded 2-discs such that the manifold resulting from cutting along the discs is a 3-ball. Up to homeomorphism, there are exactly two handlebodies (one is orientable and another is non-orientable) of any positive integer genus.

\section{Main results}
Let  $(\Gamma,\gamma)$ be a 4-colored graph with the color set $\Delta_3=\{0,1,2,3\}$ regular with respect to the color $3$, i.e., $(\Gamma,\gamma)\in \mathbb{G}_3$.  Let $2p$ and $2\overline{p}$ denote the number of vertices and boundary vertices of $(\Gamma,\gamma)$ respectively. Thus the number of interior vertices is $2p-2\overline{p}$ and number of $3$-colored edges is  $p-\overline{p}$.   Let $g_{ij},C_{ij}$ denote the number of components and cycles in $\Gamma_{ij}$ for $0\leq i,j\leq 3$.
Then it is easy to see that $g_{i3}=\bar{p}+C_{i3}$,  for $0\leq i \leq 2$.

Let $M$ be a connected compact $3$-manifold with connected boundary $\partial M$. If $\partial M$ is a  non-orientable surface then $\partial M$ must be  an $n$-connected sum of Klein bottles, for some $n\geq 1$. Thus, the regular genus of the boundary surface $\partial M$ is a non-negative integer. 

\begin{lemma}\label{lemma:boundarygenus}
For  $n\in \mathbb{N}\cup\{0\}$,  let $(\Gamma,\gamma)$ be a crystallization of a connected compact $3$-manifold $M$ with connected  boundary surface of regular genus $n$. Then  $|V(\partial \Gamma)| = 2+4n$.
 \end{lemma}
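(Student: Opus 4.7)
The plan is as follows. Since $(\Gamma,\gamma)$ is a crystallization of $M$ with $h=1$, the graph is $\partial$-contracted, hence $\Gamma_{\hat c}$ is connected for every color $c \in \{0,1,2,3\}$. My first goal would be to show that this connectedness descends to the boundary graph: for each pair $\{j,k\}\subset\{0,1,2\}$ the subgraph $(\partial\Gamma)_{\{j,k\}}$ is connected. This follows once I identify the components of $(\partial\Gamma)_{\{j,k\}}$ with the components of $\Gamma_{\{j,k,3\}}=\Gamma_{\hat i}$ (where $i$ is the third color) that contain at least one boundary vertex: by the definition of $\partial\Gamma$, a walk in $\partial\Gamma$ on colors $\{j,k\}$ is precisely the trace at boundary vertices of an alternating walk in $\Gamma$ that uses only colors $j,k,3$. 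Since $\Gamma_{\hat i}$ is connected and necessarily contains boundary vertices, $(\partial\Gamma)_{\{j,k\}}$ is connected.

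Next, I would translate this into a vertex count for the associated simplicial cell-complex $\mathcal K(\partial\Gamma)$, which represents $\partial M$. By the general correspondence (recalled in the preliminaries) between components of $B$-colored subgraphs and simplices whose vertex labels form the complementary set, the $0$-simplices of $\mathcal K(\partial\Gamma)$ are in bijection with components of $(\partial\Gamma)_{\{j,k\}}$ as $\{j,k\}$ ranges over pairs in $\{0,1,2\}$. By the previous step there is exactly one component for each of the three pairs, so $\mathcal K(\partial\Gamma)$ has exactly $3$ vertices.

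Then I would compute the Euler characteristic of $\partial M$ directly from the cellular structure of $\mathcal K(\partial\Gamma)$. Writing $|V(\partial\Gamma)| = 2\bar p$, the complex has $2\bar p$ triangles, and $3\bar p$ edges because every vertex of $\partial\Gamma$ has degree $3$, together with the $3$ vertices just counted. Hence
$$\chi(\partial M)\;=\;3-3\bar p+2\bar p\;=\;3-\bar p.$$
On the other hand, whether $\partial M$ is the orientable surface of genus $n$ or the connected sum of $2n$ Klein bottles (which is the only non-orientable possibility when the regular genus equals $n$, as noted in the paragraph preceding the lemma), the Euler characteristic equals $2-2n$. Equating the two expressions gives $\bar p = 1+2n$, and therefore $|V(\partial\Gamma)| = 2\bar p = 2+4n$.

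The main obstacle is the component-counting step on the boundary graph, since it must be carried out combinatorially from the definition of $(\partial\Gamma,\partial\gamma)$ and the $\partial$-contractedness of $(\Gamma,\gamma)$, without invoking any topological fact about $\partial M$; the Euler characteristic computation afterwards is then a direct count.
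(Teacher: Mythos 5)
Your second and third steps are exactly the paper's proof: $\mathcal{K}(\partial\Gamma)$ has $V=3$ vertices, $F=|V(\partial\Gamma)|=2\bar p$ triangles and $E=3\bar p$ edges, and $3-\bar p=\chi(\partial M)=2-2n$ forces $|V(\partial\Gamma)|=2+4n$. (One small slip there: a non-orientable boundary surface of regular genus $n$ is the connected sum of $n$ Klein bottles, i.e.\ non-orientable genus $2n$, not of $2n$ Klein bottles; the value $\chi=2-2n$ that you actually use is the correct one.)

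The gap is in your first step, precisely at the point you yourself flag as the main obstacle. From the definition of $\partial\Gamma$ you only get one direction of your correspondence: a $\{j,k\}$-colored walk in $\partial\Gamma$ lifts to a walk in $\Gamma_{\{j,k,3\}}=\Gamma_{\hat{i}}$. The direction you need --- that any two boundary vertices lying in the same component of $\Gamma_{\hat{i}}$ can be joined by a walk of the special alternating form, and hence lie in one component of $(\partial\Gamma)_{\{j,k\}}$ --- does not follow from the definition, and it cannot be established ``combinatorially, without invoking any topological fact about $\partial M$'': a connected $3$-colored graph with boundary may well have a disconnected boundary graph (for instance, any colored graph representing an annulus, whose boundary graph consists of two cycles), so connectedness of $\Gamma_{\hat{i}}$ alone never yields connectedness of $(\partial\Gamma)_{\{j,k\}}$. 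What rules this out here is the manifold hypothesis: the component of $\Gamma_{\hat{i}}$ corresponds to an $i$-labelled vertex of $\mathcal{K}(\Gamma)$ lying on $\partial M$, whose link in the $3$-manifold is a disc, so only one $\{j,k\}$-colored cycle of $\partial\Gamma$ can arise from it. Alternatively --- and this is the paper's route --- you can bypass your correspondence entirely: the preliminaries record that a $\partial$-contracted graph with connected boundary has contracted boundary graph, i.e.\ $(\partial\Gamma,\partial\gamma)$ is a crystallization of $\partial M$, which gives $V=3$ at once; as written, your proposal essentially re-derives this known fact and leaves its only nontrivial implication unproved.
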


\begin{proof}
Since  $(\Gamma,\gamma)$ is a crystallization of a connected compact  $3$-manifold $M$ with connected  boundary,
$(\partial \Gamma,\partial \gamma)$ is a crystallization of $\partial M$. Because $\partial M$ has regular genus $n$, we conclude that $\chi(\mathcal{K}(\partial \Gamma))=2-2n$.
Let $V, E, F$ be the number of the vertices, edges and triangles of the corresponding simplicial cell complex $\mathcal{K}(\partial \Gamma)$ respectively. Then $V= 3$ and $2E=3F$. Thus,
$$V-E+F=\chi(\partial M)=2-2n.$$
This implies, $F=2+4n$, i.e., $|V(\partial\Gamma)| = 2+4n$. 
\end{proof}

 \begin{lemma}\label{lemma:g-C}
 Let $(\Gamma,\gamma)\in \mathbb{G}_3$ be a crystallization of a  connected compact $3$-manifold with connected boundary.  Let $|V(\partial\Gamma)|=2+4n$, for some $n\in \mathbb{N}\cup\{0\}$.
 Then for $\{i,j,k\}=\{0,1,2\}$,
 \begin{enumerate}[$(i)$]
 \item $g_{i3} = 1+2n +C_{i3}$,
  \item $g_{i3} = n +g_{jk}$,
  \item $g_{jk}=1+n+C_{i3}$.
 \end{enumerate}  
 \end{lemma}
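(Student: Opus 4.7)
The proof is essentially a direct unpacking of the hypotheses combined with the numerical identities already collected in the excerpt, so my plan is organised as three short computations.

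First, I would record the basic parameters. Since the boundary graph $(\partial\Gamma,\partial\gamma)$ has $2\bar p = 2 + 4n$ vertices, we have $\bar p = 1+2n$. This is the only quantitative input beyond what is already stated. For part $(i)$, I would simply invoke the identity $g_{i3} = \bar p + C_{i3}$ for $0\le i\le 2$ recorded just before the lemma (which follows from the fact that each $\{i,3\}$-cycle of $\partial\Gamma$ comes from an $\{i,3\}$-colored component of $\Gamma$ in which the $3$-colored edges alternate with $i$-colored ones, and each boundary vertex sits on exactly one such component that is not a cycle). Substituting $\bar p = 1+2n$ immediately yields $g_{i3} = 1+2n+C_{i3}$.

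For part $(ii)$, I would apply Proposition \ref{prop:boundary}$(ii)$ with $h=1$. That proposition gives
\begin{equation*}
g_{03}-g_{12} \;=\; g_{13}-g_{02} \;=\; g_{23}-g_{01} \;=\; \tfrac{\bar p}{2}+\tfrac{h}{2}-1.
\end{equation*}
Plugging in $\bar p = 1+2n$ and $h=1$ gives the right-hand side equal to $\frac{1+2n}{2}+\frac{1}{2}-1 = n$. Thus for any $\{i,j,k\}=\{0,1,2\}$ we obtain $g_{i3} = n + g_{jk}$, proving $(ii)$.

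Part $(iii)$ is then just the combination of $(i)$ and $(ii)$: $g_{jk} = g_{i3}-n = (1+2n+C_{i3})-n = 1+n+C_{i3}$.

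There is no real obstacle here; the only small care needed is to verify that Proposition \ref{prop:boundary} applies (which it does, since $(\Gamma,\gamma)$ is assumed to be a crystallization of a $3$-manifold with connected boundary, so $h=1$), and to correctly identify $\bar p$ from the hypothesis $|V(\partial\Gamma)| = 2+4n$. Everything else is routine substitution.
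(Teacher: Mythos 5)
Your proposal is correct and follows essentially the same route as the paper: part $(i)$ from $\bar p = 1+2n$ together with the identity $g_{i3}=\bar p + C_{i3}$, part $(ii)$ from Proposition \ref{prop:boundary}$(ii)$ with $h=1$ giving the common difference $n$, and part $(iii)$ by combining the two. No gaps.
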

 
 \begin{proof}
Since $2\overline{p}=2+4n$ and $g_{i3}=\bar{p}+C_{i3}$, it follows  that
$$
g_{i3}=1+2n+C_{i3}.
$$
Further, it follows from Proposition \ref{prop:boundary} that
\begin{align*}
g_{i3}-g_{jk}&=\frac{\overline{p}}{2}-\frac{1}{2}\\
&=n.
\end{align*}
These prove Parts $(i)$ and $(ii)$, and Part $(iii)$ follows from Parts  $(i)$ and $(ii)$.
 \end{proof}

From Proposition \ref{prop:boundary}, we have $g_{01}+g_{02}+g_{12}=2+p$. Therefore, it is easy to prove the following.

\begin{corollary}\label{cor:bound}
Let $(\Gamma,\gamma)\in \mathbb{G}_3$ be a crystallization of a connected compact $3$-manifold  with connected  boundary surface of regular genus $n$. 
Then $|V(\Gamma)|\geq 2+6n$ and
\begin{align*}
g_{03}, g_{13}, g_{23}&\geq 1+2n, \\
g_{01}, g_{02}, g_{12}&\geq 1+n.
\end{align*}
Moreover, if  $|V(\Gamma)| = 2+6n$ then
\begin{align*}
g_{03}, g_{13}, g_{23}&= 1+2n, \\
g_{01}, g_{02}, g_{12}&=1+n.
\end{align*}
\end{corollary}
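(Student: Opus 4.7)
My plan is to read off all three types of bounds directly from Lemma \ref{lemma:g-C} together with the counting identity in Proposition \ref{prop:boundary}(iii), and then deduce the equality case from a single sum.

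First, for any $\{i,j,k\}=\{0,1,2\}$, Lemma \ref{lemma:g-C}(iii) gives $g_{jk}=1+n+C_{i3}$. Since $C_{i3}\geq 0$ (it merely counts $\{i,3\}$-colored cycles in $\Gamma$), this yields $g_{jk}\geq 1+n$, handling the three inequalities for $g_{01}, g_{02}, g_{12}$. Similarly, Lemma \ref{lemma:g-C}(i) gives $g_{i3}=1+2n+C_{i3}\geq 1+2n$, which handles $g_{03}, g_{13}, g_{23}$. So all six lower bounds on the bicolor counts are immediate from non-negativity of $C_{i3}$.

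Next, I would invoke Proposition \ref{prop:boundary}(iii), which says $g_{01}+g_{02}+g_{12}=2+p$ where $2p=|V(\Gamma)|$. Summing the three lower bounds $g_{01},g_{02},g_{12}\geq 1+n$ gives $2+p\geq 3(1+n)=3+3n$, i.e.\ $p\geq 1+3n$, hence $|V(\Gamma)|=2p\geq 2+6n$, as required.

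For the moreover part, assume $|V(\Gamma)|=2+6n$, so $p=1+3n$ and the identity becomes $g_{01}+g_{02}+g_{12}=3+3n$. But each summand is at least $1+n$, so the only way the sum can equal $3(1+n)$ is if $g_{01}=g_{02}=g_{12}=1+n$. Feeding this back through Lemma \ref{lemma:g-C}(ii), namely $g_{i3}=n+g_{jk}$, immediately yields $g_{03}=g_{13}=g_{23}=1+2n$. There is no real obstacle here; the whole statement is a clean bookkeeping consequence of the lemma, and the only thing to double-check is that the equality case argument uses the already-established lower bounds rather than $C_{i3}\geq 0$ directly, so that nothing circular sneaks in.
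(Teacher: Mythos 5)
Your argument is correct and is essentially the proof the paper intends: it reads the lower bounds off Lemma \ref{lemma:g-C} via $C_{i3}\geq 0$, sums them against the identity $g_{01}+g_{02}+g_{12}=2+p$ from Proposition \ref{prop:boundary}, and settles the equality case by forcing each summand to its minimum. The only step left implicit is that the hypothesis of Lemma \ref{lemma:g-C}, namely $|V(\partial\Gamma)|=2+4n$, comes from Lemma \ref{lemma:boundarygenus} applied to the genus-$n$ boundary, which is harmless.
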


%\begin{proof}
%It follows from Lemma \ref{lemma:boundarygenus} that $|V(\partial \Gamma)|= 2+4n$. Then Lemma \ref{lemma:g-C} implies $g_{i3} = 1+2n +C_{i3}$ for $i\in\{0,1,2\}$. Thus, $g_{i3}  \geq 1+2n$. Moreover, $g_{i3}=1+2n$ if and only if $C_{i3}=0$. Further
%Lemma \ref{lemma:g-C} implies that, for $\{i,j,k\}=\{0,1,2\}$, $g_{jk}=1+n+C_{i3}$. Therefore, $g_{jk}\geq 1+n$ for $j,k\in \{0,1,2\}$.
%Now, from Proposition \ref{prop:boundary}, we have $g_{01}+g_{02}+g_{12}=2+p$, where $2p=|V(\Gamma)|$. This implies,
%$2+p\geq 3+3n$, i.e., 
%%
%$$|V(\Gamma)|=2p\geq 2+6n.$$
%
%
%Now if $2\overline{p}=|V(\partial\Gamma)|=2+4n$ and $2p=|V(\Gamma)| = 2+6n$ then $2+6n=2g_{01}+2g_{02}+2g_{12}-4$. This implies, $C_{03}+C_{13}+C_{23}=0$, i.e., $C_{03}=C_{13}=C_{23}=0$. Thus,
%%
%$$ g_{03}, g_{13}, g_{23}= 1+2n \mbox{ and }g_{01}, g_{02}, g_{12}=1+n.$$
%\end{proof}

\begin{lemma}\label{lemma:genus}
For  $n\in \mathbb{N}\cup\{0\}$,  let $(\Gamma,\gamma)\in \mathbb{G}_3$ be a crystallization of a connected compact  $3$-manifold with connected boundary of regular genus $n$. Then $\Gamma$ represents a handlebody if and only if at least one of $g_{01}$, $g_{02}$ and $g_{12}$ attains the minimum (i.e., equals to $1+n$).
\end{lemma}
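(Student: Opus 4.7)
The plan is to derive a closed-form expression for the regular genus $\rho(\Gamma)$ in terms of the three bicolor cycle numbers $g_{01},g_{02},g_{12}$ and then identify the handlebody regime using Proposition~\ref{prop:handlebody}. For a cyclic permutation $\varepsilon=(\varepsilon_0,\varepsilon_1,\varepsilon_2,3)$ of $\Delta_3$, equation~(\ref{relation_chi}) with $d=3$ expresses $\chi_\varepsilon(\Gamma)$ as a sum of four bicolor cycle counts minus $2\dot p+\bar p$. Two of those counts, $C_{\varepsilon_0\varepsilon_1}$ and $C_{\varepsilon_1\varepsilon_2}$, coincide with $g_{\varepsilon_0\varepsilon_1}$ and $g_{\varepsilon_1\varepsilon_2}$ since no color-$3$ edges are involved, while Lemma~\ref{lemma:g-C}(iii) rewrites the other two as $C_{\varepsilon_2 3}=g_{\varepsilon_0\varepsilon_1}-1-n$ and $C_{\varepsilon_0 3}=g_{\varepsilon_1\varepsilon_2}-1-n$. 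Substituting $\bar p=1+2n$ (Lemma~\ref{lemma:boundarygenus}) and $\dot p=p-\bar p$ collapses the expression to $\chi_\varepsilon(\Gamma)=2(g_{\varepsilon_0\varepsilon_1}+g_{\varepsilon_1\varepsilon_2})-2p-1$.

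For the hole count $\lambda_\varepsilon=\partial g_{\varepsilon_0\varepsilon_2}$, I note that $\partial\Gamma$ is a contracted $3$-colored graph representing the connected closed surface $\partial M$, so every two-color subgraph $\partial\Gamma_{\{i,k\}}$ reduces to a single cycle, giving $\lambda_\varepsilon=1$. Plugging in yields $\rho_\varepsilon(\Gamma)=p+1-g_{\varepsilon_0\varepsilon_1}-g_{\varepsilon_1\varepsilon_2}$, which depends only on the middle color $\varepsilon_1\in\{0,1,2\}$; minimizing over $\varepsilon$ therefore amounts to dropping the smallest of $g_{01},g_{02},g_{12}$, and Proposition~\ref{prop:boundary}(iii) ($g_{01}+g_{02}+g_{12}=p+2$) gives
$$\rho(\Gamma)=\min\{g_{01},g_{02},g_{12}\}-1.$$
Combined with Corollary~\ref{cor:bound}, this yields $\rho(\Gamma)\geq n$, with equality exactly when some $g_{ij}$ saturates the bound $1+n$.

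The $(\Leftarrow)$ direction is then immediate: if some $g_{ij}=1+n$, then $\rho(\Gamma)=n$, so $\mathcal{G}(M)\leq n=\mathcal{G}(\partial M)$; combined with the standard inequality $\mathcal{G}(M)\geq \mathcal{G}(\partial M)$, Proposition~\ref{prop:handlebody} identifies $M$ as a handlebody of genus $n$. The $(\Rightarrow)$ direction is the main obstacle: Proposition~\ref{prop:handlebody} gives $\mathcal{G}(M)=n$ for a handlebody, hence $\rho(\Gamma)\geq n$, but not the reverse inequality required to force $\min\{g_{01},g_{02},g_{12}\}=1+n$ for a crystallization that might not be of minimum vertex count. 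The plan to close this gap is to exploit the saturation $\bar p=1+2n$ of the boundary count together with the handlebody structure in order to exhibit a cyclic permutation $\varepsilon$ with $\rho_\varepsilon(\Gamma)=n$, i.e., one of the three pair-sums attains the maximum value $p+1-n$. This saturation step, which amounts to verifying that $\rho(\Gamma)$ equals $\mathcal{G}(M)$ for every crystallization of a handlebody, is expected to be the hardest part of the argument.
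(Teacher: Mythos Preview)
Your computation of $\rho(\Gamma)=\min\{g_{01},g_{02},g_{12}\}-1$ coincides with the paper's (there written as $\rho(\Gamma)=n+\min\{a,b,c\}$ with $a=C_{23}$, $b=C_{13}$, $c=C_{03}$), so up to that point the two arguments are identical. Where you diverge is in the last step: the paper does \emph{not} carry out any ``saturation'' work for the $(\Rightarrow)$ direction. It simply writes ``From Proposition~\ref{prop:handlebody}, we know that $\Gamma$ represents a handlebody if and only if $\rho(\Gamma)=n$'' and concludes. Your hesitation is understandable, since Proposition~\ref{prop:handlebody} as recorded in this paper is phrased in terms of $\mathcal{G}(M)$ rather than $\rho(\Gamma)$ of an individual crystallization; the paper is tacitly relying on the stronger gem-level form of Casali's $3$-dimensional characterization in~\cite{ca90}, which yields the biconditional ``$(\Gamma,\gamma)$ represents a handlebody iff some permutation $\varepsilon$ has $\rho_\varepsilon(\Gamma)=\mathcal{G}(\partial M)$'' for the given crystallization. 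So the hard step you anticipated is not an additional argument to be supplied here---it is already in the cited source---and with that citation your proposal matches the paper's proof.
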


\begin{proof}
Let $2\overline{p}$ and $2\dot{p}$ denote the number of boundary vertices and internal vertices respectively, and $2p=2\overline{p}+2\dot{p}$. Since $(\Gamma,\gamma)$ is a crystallization of a connected compact  $3$-manifold $M$ with connected boundary surface of regular genus $n$, we have $2\bar p = 2+4n$ by Lemma \ref{lemma:boundarygenus}. 
Moreover, Corollary  \ref{cor:bound} implies, $2p\geq 2+6n$. Let $2p=2r+6n$, for $r\geq 1$. Because $2\overline{p}=2+4n$, we have $2\dot{p}=2r+2n-2$. Further, it follows from Lemma \ref{lemma:g-C} that
 $g_{12}-C_{03}=g_{02}-C_{13}=g_{01}-C_{23}=n+1$. Let $C_{23}=a, C_{13}=b$ and $C_{03}=c$. Then
 
$$\begin{matrix}
g_{01}&g_{02}&g_{12}&C_{03}&C_{13}&C_{23}\\
(1+n+a)&(1+n+b)&(1+n+c)&c&b&a
\end{matrix}$$
Since $g_{01}+g_{02}+g_{12}=p+2=r+3n+2$, it follows that $r=a+b+c+1$.
Thus, for $\epsilon=(\epsilon_0,\epsilon_1,\epsilon_2,\epsilon_3=3)$, we have
\begin{align*}
\chi_\epsilon (\Gamma)&= g_{\epsilon_0 \epsilon_1}+g_{\epsilon_1 \epsilon_2}+C_{\epsilon_2 \epsilon_3}+C_{\epsilon_0 \epsilon_3}-(2r+2n-2)-(1+2n)\\
&=g_{\epsilon_0 \epsilon_1}+g_{\epsilon_1 \epsilon_2}+C_{\epsilon_2 \epsilon_3}+C_{\epsilon_0 \epsilon_3}-1-4n-2(a+b+c).
\end{align*}
Thus, for different permutations $\epsilon=(\epsilon_0,\epsilon_1,\epsilon_2,\epsilon_3=3)$, the possible choices for $\chi_\epsilon(\Gamma)$ are $1-2n-2a$, $1-2n-2b$ or $1-2n-2c$.
Since $(\partial \Gamma,\partial \gamma)$  is a crystallization of $\partial M$, $\partial g_{ij}=1$, i.e., $\lambda_\epsilon=1$. Thus, the possible choices for $\rho_\epsilon (\Gamma)$ are $n+a$, $n+b$ or $n+c$.
Therefore,
%
%$$\begin{matrix}
%\epsilon&\chi_\epsilon(\Gamma) &\lambda_\epsilon&\rho_\epsilon (\Gamma)\\
%(0123)&1-2n-2b&1&n+b\\
%(0213)&1-2n-2a&1&n+a\\
%(1023)&1-2n-2c&1&n+c\\
%(1203)&1-2n-2a&1&n+a\\
%(2013)&1-2n-2c&1&n+c\\
%(2103)&1-2n-2b&1&n+b
%\end{matrix}$$
%%
%%
 $\rho(\Gamma)=\min\{n+a,n+b,n+c\}$. From Proposition \ref{prop:handlebody}, we know that
$\Gamma$ represents a handlebody if and only if $\rho(\Gamma)=n$. Thus, $\Gamma$ represents a handlebody if and only if at least one of $g_{01}$, $g_{02}$ and $g_{12}$ equals to $(1+n)$.
\end{proof}

\begin{lemma}\label{lemma:gem-genus}
Let $M$ be a connected compact $3$-manifold with connected boundary. Then, $\mathit{k} (M) \geq 3 \mathcal{G}(M)$. 
\end{lemma}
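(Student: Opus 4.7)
The plan is to apply the computations carried out in the proof of Lemma \ref{lemma:genus} to an arbitrary crystallization $(\Gamma,\gamma)$ of $M$. Let $n = \mathcal{G}(\partial M)$ and, following the notation of that lemma, set $a = C_{23}$, $b = C_{13}$, $c = C_{03}$. The two identities I wish to extract from that proof are
$$p - 1 \;=\; 3n + (a + b + c) \qquad \text{and} \qquad \rho(\Gamma) \;=\; n + \min\{a, b, c\}.$$
Both of these follow directly from Proposition \ref{prop:boundary}, Lemma \ref{lemma:boundarygenus} and Lemma \ref{lemma:g-C}, all of which apply to every crystallization of $M$; in particular, neither identity genuinely requires the minimality assumption implicit in the statement of Lemma \ref{lemma:genus}.

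The heart of the argument is then the elementary inequality $a + b + c \geq 3 \min\{a, b, c\}$, which yields
$$p - 1 \;=\; 3n + (a + b + c) \;\geq\; 3n + 3\min\{a, b, c\} \;=\; 3\rho(\Gamma) \;\geq\; 3\mathcal{G}(M),$$
the final inequality being just the definition $\mathcal{G}(M) = \min\{\rho(\Gamma') : (\Gamma',\gamma') \text{ represents } M\}$. Since this bound holds for every crystallization of $M$, taking the minimum of the left-hand side over all crystallizations of $M$ gives $k(M) \geq 3\mathcal{G}(M)$, which is the claim.

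There is no real obstacle here: the Euler-characteristic bookkeeping has already been done in Lemmas \ref{lemma:boundarygenus} and \ref{lemma:genus}. The only point that deserves a moment of care is the remark that the two displayed identities hold for an \emph{arbitrary} crystallization representing $M$, not merely for a minimal one; but this is immediate, since their derivations use only the relation $g_{01}+g_{02}+g_{12} = 2+p$ from Proposition \ref{prop:boundary}$(iii)$, the identities $g_{jk} = 1 + n + C_{i3}$ from Lemma \ref{lemma:g-C}$(iii)$, the count $|V(\partial\Gamma)| = 2+4n$ from Lemma \ref{lemma:boundarygenus}, and the formula \eqref{relation_chi} for $\chi_\varepsilon(\Gamma)$ together with $\lambda_\varepsilon = 1$ (which holds because $\partial M$ is connected). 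All of these are properties of every crystallization of $M$, so the chain of inequalities above applies to each such $(\Gamma,\gamma)$ before any minimisation is carried out.
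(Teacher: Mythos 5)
Your proposal is correct and follows essentially the same route as the paper: both take an arbitrary crystallization, extract $p-1=3n+(a+b+c)$ from Proposition \ref{prop:boundary} together with Lemma \ref{lemma:g-C}, use the computation in the proof of Lemma \ref{lemma:genus} to get $\rho(\Gamma)=n+\min\{a,b,c\}$, and conclude via $p-1\geq 3\rho(\Gamma)\geq 3\mathcal{G}(M)$. Your remark that these identities hold for every crystallization (there is in fact no minimality hypothesis in Lemma \ref{lemma:genus}) is exactly the observation the paper relies on implicitly.
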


\begin{proof}
Let $(\Gamma,\gamma) \in \mathbb{G}_3$ be a crystallization for $M$ with $2p$ vertices. Let $\mathcal{G}(\partial M)=n$. Then, from the proof of Lemma \ref{lemma:genus}, we have the regular genus of $(\Gamma,\gamma)$, $\rho(\Gamma)=  \min  \{n+a,n+b,n+c\}$, where $C_{23}=a, C_{13}=b$ and $C_{03}=c$. It follows from Lemma \ref{lemma:g-C},  $g_{jk}=1+n+C_{i3}$ for $i,j,k \in \{0,1,2\}$. Since $p=(g_{01}+g_{02}+g_{12})-2$,  $p= 1+3n+a+b+c$. Thus, $p-1= (n+a)+(n+b)+(n+c) \geq 3 \rho(\Gamma) \geq 3\mathcal{G}(M)$. Since $(\Gamma,\gamma) \in \mathbb{G}_3 $ is an arbitrary crystallization for $M$, we have $\mathit{k} (M) \geq 3 \mathcal{G}(M)$. 
\end{proof}

\begin{example}[Gem-complexity of a handlebody $M$ is  $3\mathcal{G}(\partial M)$]{\rm Let $M$ be a 3-dimensional handlebody. It is known that $\partial M$ is either $\mathbb{S}^2$, $\#_n (\mathbb{S}^1\times \mathbb{S}^1 )$ or $\#_n (\TPSS)$ for some $n\geq 1$. For $n\in \mathbb{N}\cup\{0\}$, we have given a crystallization of a connected compact `$3$-manifold $M$ with boundary surface $\#_n (\mathbb{S}^1\times \mathbb{S}^1 )$ (cf. Part $(a)$ of Figure \ref{fig:1}) and $\#_n (\TPSS)$ (cf. Part $(b)$ of Figure \ref{fig:1})' with exactly $2+6n$ vertices, where $n=\mathcal{G}(\partial M)$. Since $\rho(\Gamma)=\mathcal{G}(\partial M)$, $M$ is a handlebody. Therefore, by Lemma \ref{lemma:gem-genus} we have the gem-complexity of a handlebody $M$ is  $3\mathcal{G}(\partial M)$.}
\end{example}

\begin{figure}[ht]
\tikzstyle{vert}=[circle, draw, fill=black!100, inner sep=0pt, minimum width=4pt] \tikzstyle{vertex}=[circle,
draw, fill=black!00, inner sep=0pt, minimum width=4pt] \tikzstyle{ver}=[] \tikzstyle{extra}=[circle, draw,
fill=black!50, inner sep=0pt, minimum width=2pt] \tikzstyle{edge} = [draw,thick,-] \centering
\begin{tikzpicture}[scale=0.8]

\begin{scope}[shift={(5,7)}]
\node[ver] (3) at (1,-4.5){\tiny{$3$}}; 
\node[ver] (2) at (1,-4){\tiny{$2$}};
\node[ver](1) at (1,-3.5){\tiny{$1$}}; 
\node[ver](0) at (1,-3){\tiny{$0$}}; 
\node[ver] (8) at (3,-4.5){}; 
\node[ver](7) at (3,-4){}; 
\node[ver](6) at (3,-3.5){}; 
\node[ver] (5) at (3,-3){};
\path[edge] (0) -- (5);
\draw [line width=3pt, line cap=round, dash pattern=on 0pt off 2\pgflinewidth] (0) -- (5);
\path[edge] (1) -- (6);
\path[edge,dotted] (2) -- (7);
\path[edge,dashed] (3) -- (8);

\end{scope}

\begin{scope}[shift={(0,8)}]
\foreach \x/\y/\z/\w in {-3/2/2.5/v_{6},-3/-2/-2.5/v_{7},1/2/2.5/v_{12},1/-2/-2.5/v_{13},4/1/1.5/v_{6n},4/-1/-1.5/v_{6n+1}}
{\node[ver](\w) at (\x,\z){\tiny{$\w$}};
\node[vert] (\w) at (\x,\y){};} 

\foreach \x/\y/\z/\w in {-1/2/2.5/v_{8},3/2/2.5/v_{6n-4},-4/1/1.5/v_{2}}
{\node[ver](\w) at (\x,\z){\tiny{$\w$}};
\node[vertex] (\w) at (\x,\y){};}

\node[ver]() at (5.5,0){\tiny{$v_{0}$}};
\node[vertex] (v_{0}) at (5,0){};
\node[ver]() at (-5.5,0){\tiny{$v_{1}$}};
\node[vert] (v_{1}) at (-5,0){};
\node[ver]() at (-4.3,-1.2){\tiny{$v_{3}$}};
\node[vertex] (v_{3}) at (-4,-1){};

\node[ver]() at (-0.7,-2.2){\tiny{$v_{9}$}};
\node[vertex] (v_{9}) at (-1,-2){};

\node[ver]() at (3.5,-2.2){\tiny{$v_{6n-3}$}};
\node[vertex] (v_{6n-3}) at (3,-2){};

\foreach \x/\y/\z/\w in {-4/-3/-3.5/v_{4},-1/-3/-0.5/v_{10},3/-3/3.5/v_{6n-2}}
{\node[ver](\w) at (\z,\y){\tiny{$\w$}};
\node[vert] (\w) at (\x,\y){};}

\foreach \x/\y/\z/\w in {-4/-4/-3.5/v_{5},-1/-4/-0.5/v_{11},3/-4/3.5/v_{6n-1}}
{\node[ver](\w) at (\z,\y){\tiny{$\w$}};
\node[vertex] (\w) at (\x,\y){};}

\foreach \x/\y in {1.7/2,2/2,2.3/2,1.7/-2,2/-2,2.3/-2}
{\node[extra] () at (\x,\y){};}

\foreach \x/\y in {v_{1}/v_{2},v_{1}/v_{3},v_{2}/v_{6},v_{3}/v_{7},v_{7}/v_{9},v_{6}/v_{8},v_{8}/v_{12},v_{9}/v_{13},v_{6n-4}/v_{6n},v_{6n}/v_{0},v_{0}/v_{6n+1},v_{6n-3}/v_{6n+1}}
{\path[edge] (\x) -- (\y);}

\foreach \x/\y in {v_{1}/v_{2},v_{3}/v_{7},v_{6}/v_{8},v_{9}/v_{13},v_{6n}/v_{0},v_{6n-3}/v_{6n+1}}
{\draw [line width=3pt, line cap=round, dash pattern=on 0pt off 2\pgflinewidth]  (\x) -- (\y);}

\draw[edge] plot [smooth,tension=1] coordinates{(v_{4}) (-4.2,-3.5) (v_{5})};
\draw[edge] plot [smooth,tension=1] coordinates{(v_{4}) (-3.8,-3.5) (v_{5})};
\draw[line width=3pt, line cap=round, dash pattern=on 0pt off 2\pgflinewidth] plot [smooth,tension=1] coordinates{(v_{4}) (-3.8,-3.5) (v_{5})};

\draw[edge] plot [smooth,tension=1] coordinates{(v_{10}) (-1.2,-3.5) (v_{11})};
\draw[edge] plot [smooth,tension=1] coordinates{(v_{10}) (-0.8,-3.5) (v_{11})};
\draw[line width=3pt, line cap=round, dash pattern=on 0pt off 2\pgflinewidth] plot [smooth,tension=1] coordinates{(v_{10}) (-0.8,-3.5) (v_{11})};

\draw[edge] plot [smooth,tension=1] coordinates{(v_{6n-2}) (3.2,-3.5) (v_{6n-1})};
\draw[edge] plot [smooth,tension=1] coordinates{(v_{6n-2}) (2.8,-3.5) (v_{6n-1})};
\draw[line width=3pt, line cap=round, dash pattern=on 0pt off 2\pgflinewidth] plot [smooth,tension=1] coordinates{(v_{6n-2}) (3.2,-3.5) (v_{6n-1})};

\foreach \x/\y in {v_{3}/v_{4},v_{9}/v_{10},v_{6n-3}/v_{6n-2}}
{\path[edge, dashed] (\x) -- (\y);}

\path[edge, dotted] (v_{0}) -- (v_{1});

\foreach \x/\y/\z/\w in {v_{2}/v_{4}/-4.7/-1.5,v_{6}/v_{5}/-2.5/-1.5,v_{3}/v_{7}/-3.2/-1.2, v_{8}/v_{10}/-1.5/-1.5, v_{9}/v_{13}/0/-1.5,v_{12}/v_{11}/1.7/-1.7,v_{6n+1}/v_{6n-3}/3.2/-1.2, v_{6n-4}/v_{6n-2}/2.5/-1.5, v_{6n}/v_{6n-1}/4.5/-2}{
\path[edge, dotted] plot [smooth,tension=1] coordinates{(\x) (\z,\w) (\y)};}

\node[ver]() at (0,-5){$(a)$ \mbox{A crystallization of the orientable handlebody.}};
\end{scope}

\begin{scope}[shift={(0,0)}]
\foreach \x/\y/\z/\w in {-3/2/2.5/v_{6},-3/-2/-2.5/v_{7},1/2/2.5/v_{12},1/-2/-2.5/v_{13},4/1/1.5/v_{6n},4/-1/-1.5/v_{6n+1}}
{\node[ver](\w) at (\x,\z){\tiny{$\w$}};
\node[vert] (\w) at (\x,\y){};} 

\foreach \x/\y/\z/\w in {-1/2/2.5/v_{8},3/2/2.5/v_{6n-4},-4/1/1.5/v_{2}}
{\node[ver](\w) at (\x,\z){\tiny{$\w$}};
\node[vertex] (\w) at (\x,\y){};}

\node[ver]() at (5.5,0){\tiny{$v_{0}$}};
\node[vertex] (v_{0}) at (5,0){};
\node[ver]() at (-5.5,0){\tiny{$v_{1}$}};
\node[vert] (v_{1}) at (-5,0){};
\node[ver]() at (-4.3,-1.2){\tiny{$v_{3}$}};
\node[vertex] (v_{3}) at (-4,-1){};

\node[ver]() at (-0.7,-2.2){\tiny{$v_{9}$}};
\node[vertex] (v_{9}) at (-1,-2){};

\node[ver]() at (3.5,-2.2){\tiny{$v_{6n-3}$}};
\node[vertex] (v_{6n-3}) at (3,-2){};

\foreach \x/\y/\z/\w in {-4/-3/-3.5/v_{4},-1/-3/-0.5/v_{10},3/-3/3.5/v_{6n-2}}
{\node[ver](\w) at (\z,\y){\tiny{$\w$}};
\node[vert] (\w) at (\x,\y){};}

\foreach \x/\y/\z/\w in {-4/-4/-3.5/v_{5},-1/-4/-0.5/v_{11},3/-4/3.5/v_{6n-1}}
{\node[ver](\w) at (\z,\y){\tiny{$\w$}};
\node[vertex] (\w) at (\x,\y){};}

\foreach \x/\y in {1.7/2,2/2,2.3/2,1.7/-2,2/-2,2.3/-2}
{\node[extra] () at (\x,\y){};}

\foreach \x/\y in {v_{1}/v_{2},v_{1}/v_{3},v_{2}/v_{6},v_{3}/v_{7},v_{7}/v_{9},v_{6}/v_{8},v_{8}/v_{12},v_{9}/v_{13},v_{6n-4}/v_{6n},v_{6n}/v_{0},v_{0}/v_{6n+1},v_{6n-3}/v_{6n+1}}
{\path[edge] (\x) -- (\y);}

\foreach \x/\y in {v_{1}/v_{2},v_{3}/v_{7},v_{6}/v_{8},v_{9}/v_{13},v_{6n}/v_{0},v_{6n-3}/v_{6n+1}}
{\draw [line width=3pt, line cap=round, dash pattern=on 0pt off 2\pgflinewidth]  (\x) -- (\y);}

\draw[edge] plot [smooth,tension=1] coordinates{(v_{4}) (-4.2,-3.5) (v_{5})};
\draw[edge] plot [smooth,tension=1] coordinates{(v_{4}) (-3.8,-3.5) (v_{5})};
\draw[line width=3pt, line cap=round, dash pattern=on 0pt off 2\pgflinewidth] plot [smooth,tension=1] coordinates{(v_{4}) (-3.8,-3.5) (v_{5})};

\draw[edge] plot [smooth,tension=1] coordinates{(v_{10}) (-1.2,-3.5) (v_{11})};
\draw[edge] plot [smooth,tension=1] coordinates{(v_{10}) (-0.8,-3.5) (v_{11})};
\draw[line width=3pt, line cap=round, dash pattern=on 0pt off 2\pgflinewidth] plot [smooth,tension=1] coordinates{(v_{10}) (-0.8,-3.5) (v_{11})};

\draw[edge] plot [smooth,tension=1] coordinates{(v_{6n-2}) (3.2,-3.5) (v_{6n-1})};
\draw[edge] plot [smooth,tension=1] coordinates{(v_{6n-2}) (2.8,-3.5) (v_{6n-1})};
\draw[line width=3pt, line cap=round, dash pattern=on 0pt off 2\pgflinewidth] plot [smooth,tension=1] coordinates{(v_{6n-2}) (3.2,-3.5) (v_{6n-1})};

\foreach \x/\y in {v_{3}/v_{4},v_{9}/v_{10},v_{6n-3}/v_{6n-2}}
{\path[edge, dashed] (\x) -- (\y);}

\path[edge, dotted] (v_{0}) -- (v_{1});

\foreach \x/\y/\z/\w in {v_{2}/v_{5}/-4.7/-1.5,v_{6}/v_{4}/-3/-0.5,v_{3}/v_{7}/-3.2/-1.2, v_{8}/v_{10}/-1.5/-1.5, v_{9}/v_{13}/0/-1.5,v_{12}/v_{11}/1.7/-1.7,v_{6n+1}/v_{6n-3}/3.2/-1.2, v_{6n-4}/v_{6n-2}/2.5/-1.5, v_{6n}/v_{6n-1}/4.5/-2}{
\path[edge, dotted] plot [smooth,tension=1] coordinates{(\x) (\z,\w) (\y)};}

\node[ver]() at (0,-5){$(b)$ \mbox{A crystallization of the non-orientable handlebody.}};
\end{scope}

\end{tikzpicture}
\caption{Crystallizations of the orientable and non-orientable handlebodies with $6n+2$ vertices.}\label{fig:1}
\end{figure}

\begin{theorem} \label{theorem:gem-genus}
Let $M$ be a connected compact $3$-manifold with $h$ boundary components. Then, $\mathit{k} (M) \geq 3 (\mathcal{G}(M)+h-1)$. 
\end{theorem}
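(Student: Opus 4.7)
The plan is to adapt the proofs of Lemma \ref{lemma:genus} and Lemma \ref{lemma:gem-genus} (which handle the $h=1$ case) so that the parameter $h$ appears explicitly in every intermediate formula, and then to minimise over all crystallizations at the end.

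First, I would fix an arbitrary crystallization $(\Gamma,\gamma)\in\mathbb{G}_3$ of $M$ with $2p$ vertices, and write $N := \mathcal{G}(\partial M) = \sum_{i=1}^h \mathcal{G}(\partial^iM)$. Since each connected component of $(\partial\Gamma,\partial\gamma)$ is a crystallization of the corresponding component of $\partial M$, applying Lemma \ref{lemma:boundarygenus} component-wise gives $2\bar{p}=2h+4N$. Substituting into Proposition \ref{prop:boundary}$(ii)$ yields $g_{i3}-g_{jk}=h+N-1$, and combining this with $g_{i3}=\bar{p}+C_{i3}$ produces the natural $h$-generalisation of Lemma \ref{lemma:g-C}$(iii)$: for $\{i,j,k\}=\{0,1,2\}$,
$$g_{jk}=1+N+C_{i3}.$$
Plugging this into Proposition \ref{prop:boundary}$(iii)$ and setting $a=C_{23}$, $b=C_{13}$, $c=C_{03}$ gives $p=1+3N+a+b+c$, and hence $\dot{p}=p-\bar{p}=1+N+a+b+c-h$.

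Next, I would compute $\rho(\Gamma)$ by imitating Lemma \ref{lemma:genus}. Because each component of $\partial\Gamma$ is a contracted $3$-colored graph, $\partial g_{ij}=h$ for every pair $\{i,j\}\subset\{0,1,2\}$, so $\lambda_\epsilon=h$ for every cyclic permutation $\epsilon=(\epsilon_0,\epsilon_1,\epsilon_2,3)$ of $\Delta_3$. A direct case analysis over the three essentially distinct permutations, substituting the formulas above into equation $(\ref{relation_chi})$, shows that
$$\rho_\epsilon(\Gamma)\in\{\,N+1-h+a,\; N+1-h+b,\; N+1-h+c\,\},$$
so $\rho(\Gamma)=N+1-h+\min\{a,b,c\}$. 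This correctly recovers Lemma \ref{lemma:genus} in the case $h=1$.

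Finally, using $a+b+c\ge 3\min\{a,b,c\}$ together with $\rho(\Gamma)\ge \mathcal{G}(M)$, I obtain
$$p-1 \;=\; 3N+a+b+c \;\ge\; 3\bigl(N+\min\{a,b,c\}\bigr) \;=\; 3\bigl(\rho(\Gamma)+h-1\bigr) \;\ge\; 3\bigl(\mathcal{G}(M)+h-1\bigr),$$
and minimising over all crystallizations of $M$ yields $\mathit{k}(M)\ge 3(\mathcal{G}(M)+h-1)$. The main point of care is verifying that $\lambda_\epsilon=h$ (which should follow from the $\partial$-contracted hypothesis forcing each component of $\partial\Gamma$ to be itself contracted) and carrying out the three $\chi_\epsilon$ computations consistently; once these are in place the rest is formal.
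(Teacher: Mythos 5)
Your argument is correct, but it takes a different route from the paper. You prove the bound by redoing the whole counting of Lemmas \ref{lemma:boundarygenus}, \ref{lemma:g-C} and \ref{lemma:genus} with $h$ carried along explicitly: component-wise application of Lemma \ref{lemma:boundarygenus} gives $2\bar{p}=2h+4N$, Proposition \ref{prop:boundary}$(ii)$--$(iii)$ then give $g_{jk}=1+N+C_{i3}$ and $p=1+3N+a+b+c$, and the three $\chi_\epsilon$ computations from (\ref{relation_chi}) together with $\lambda_\epsilon=\partial g_{\epsilon_0\epsilon_2}=h$ (each component of $\partial\Gamma$ is contracted, since $\Gamma$ is $\partial$-contracted) yield $\rho(\Gamma)=N+1-h+\min\{a,b,c\}$; the inequality $a+b+c\ge 3\min\{a,b,c\}$ then closes the argument. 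I checked these computations and they are consistent (and they do reduce to the paper's Lemma \ref{lemma:genus} when $h=1$). The paper instead argues by a graph modification: it adds $h-1$ edges of color $3$ joining vertices in different boundary components, obtaining a graph $(\Gamma',\gamma')$ with connected boundary and the same vertex set, shows $\chi_\varepsilon(\Gamma)=\chi_\varepsilon(\Gamma')+(h-1)$ and hence $\rho(\Gamma')=\rho(\Gamma)+h-1$, and then quotes the connected-boundary bound $p-1\ge 3\rho(\Gamma')$ from the proof of Lemma \ref{lemma:gem-genus}. Your direct approach buys self-containedness: you never need to verify that the surgered graph is again a crystallization of a compact $3$-manifold with connected boundary, nor how its parameters $\dot p',\bar p',g'_{ij},\partial g'_{ij}$ transform. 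The paper's reduction buys brevity, reusing the $h=1$ lemma verbatim instead of repeating the permutation case analysis. Both proofs rest on the same underlying identities, so there is no gap in your version; the only points that deserve an explicit sentence in a written-up version are the two you already flag, namely that each component of $\partial\Gamma$ is itself a contracted crystallization of the corresponding boundary surface (justifying both the component-wise use of Lemma \ref{lemma:boundarygenus} and $\lambda_\epsilon=h$).
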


\begin{proof}
Let $(\Gamma,\gamma) \in \mathbb{G}_3$ be a crystallization for $M$ with $2p$ vertices. Let $2\overline{p}$ and $2\dot{p}$ be the number of boundary vertices and interior vertices of $(\Gamma,\gamma)$ respectively.  Let $(\Gamma', \gamma')$ be a graph obtained from $(\Gamma,\gamma)$ by adding $h-1$ number of edges of color $3$ 
between vertices belonging to different boundary components to connect all of them. Then
$(\Gamma', \gamma')$ is a crystallization for $|\mathcal{K}(\Gamma')|$, a $3$-manifold with connected boundary and $|V(\Gamma')|=|V(\Gamma)|=2p$. Let $2\dot{p}^ \prime$, $2\overline{p}^\prime$ be the number of interior and boundary vertices of $(\Gamma', \gamma')$ respectively. For $i,j \in \{0,1,2,3 \}$, let $g_{ij}'$ and $C_{ij}'$ denote the number of components and cycles in $\Gamma_{ij}'$ respectively. Let $\partial g_{ij}^ \prime$ be the number of $\{i,j\}$-colored cycles in $\partial \Gamma ^ \prime$. Then $2\dot{p}'=2\dot{p}+2(h-1)$, $2\overline{p}'= 2\overline{p} -2(h-1)$, $C_{i3}'=C_{i3}$, $g_{ij}'= g_{ij}$ and $\partial g_{ij}'=\partial g_{ij}-(h-1)$, for $0 \leq i,j \leq 2$. Therefore, for any permutation $\varepsilon=(\varepsilon_0, \varepsilon_1,\dots, \varepsilon_3 =3)$ of $\Delta_3$, $\chi_\varepsilon (\Gamma)= \chi_\varepsilon(\Gamma^\prime)+(h-1)$. Then, $\rho_\varepsilon(\Gamma)= \rho_\varepsilon (\Gamma^\prime)-(h-1)$. Since $\varepsilon=(\varepsilon_0, \varepsilon_1,\dots, \varepsilon_3 =3)$ is an arbitrary permutation of $\Delta_3$, $\rho (\Gamma^ \prime)=\rho(\Gamma)+h-1$. Now, by the proof of Lemma \ref{lemma:gem-genus}, we have $p-1\geq 3 \rho(\Gamma')=3(\rho(\Gamma)+h-1)\geq 3(\mathcal{G}(M)+h-1)$. Since $(\Gamma,\gamma) \in \mathbb{G}_3$ is arbitrary crystallization for $M$, $\mathit{k} (M) \geq 3 (\mathcal{G}(M)+h-1)$. 
\end{proof}

Let $M$ be a connected compact $3$-manifold with  boundary. Then from \cite{bm87,cp90}, we know that $\mathcal{G}(M) \geq \mathcal{G} (\partial M)$. Thus we have the following result.

\begin{corollary}\label{cor:gem-boundary} 
Let $M$ be a connected compact $3$-manifold with $h$ boundary components. Then, $\mathit{k} (M) \geq 3 (\mathcal{G}(M)+h-1) \geq 3 (\mathcal{G}(\partial M)+h-1)$. 
\end{corollary}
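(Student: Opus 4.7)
The statement to prove is essentially a formal combination of two facts, one of which is the theorem just established and the other is quoted from the literature, so the proof plan is very short.

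First I would invoke Theorem \ref{theorem:gem-genus} directly: for any connected compact $3$-manifold $M$ with $h$ boundary components we have already shown
\[
\mathit{k}(M) \;\geq\; 3\bigl(\mathcal{G}(M)+h-1\bigr),
\]
which takes care of the left-hand inequality of the corollary verbatim, with no further work required.

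Next I would bring in the inequality $\mathcal{G}(M) \geq \mathcal{G}(\partial M)$ cited from \cite{bm87, cp90}, which the paragraph preceding the corollary already recalls for exactly this purpose. Adding $h-1$ to both sides and multiplying by $3$ (which preserves the inequality since $3>0$) yields
\[
3\bigl(\mathcal{G}(M)+h-1\bigr) \;\geq\; 3\bigl(\mathcal{G}(\partial M)+h-1\bigr),
\]
which is the right-hand inequality. Chaining the two displays gives the desired chain.

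There is really no substantive obstacle here: the only thing to double-check is that the hypotheses of Theorem \ref{theorem:gem-genus} and of the cited bound $\mathcal{G}(M)\geq\mathcal{G}(\partial M)$ both hold under the present assumptions (namely, connected compact $3$-manifold with possibly several boundary components, $h\geq 1$), and they do. So the proof is a two-line concatenation and should be written as such.
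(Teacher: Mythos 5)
Your proposal is correct and matches the paper's own (implicit) argument exactly: the corollary is obtained by combining Theorem \ref{theorem:gem-genus} with the cited inequality $\mathcal{G}(M)\geq\mathcal{G}(\partial M)$ from \cite{bm87,cp90}, precisely as you do. No gaps or differences to report.
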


The above result suggests that  a crystallization $(\Gamma,\gamma)$ of a connected compact $3$ manifold with connected boundary has at least  $6\mathcal{G}(\partial M)+2$ vertices. Further, it is easy to see that if $\mathcal{G}(\partial M)=0$ then $|V(\Gamma)|<8$  implies $M$ is the 3-ball $D^3$, which we consider as a trivial handlebody. It is also easy to construct an 8-vertex crystallization of a connected compact  $3$-manifold $M$ with spherical boundary such that $M$ is not $D^3$. Now, we state and prove a similar result when  $M$ is a connected compact $3$-manifold with non-spherical boundary.

\begin{theorem}\label{theorem:upperbound}
If $M$ is a $3$-manifold with connected boundary and $\mathit k (M)< 3 (\mathcal{G} (\partial M)+1)$ then $M$ is a handlebody.
\end{theorem}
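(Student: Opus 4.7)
The plan is to chain the inequality from Lemma \ref{lemma:gem-genus} with the known lower bound $\mathcal{G}(M) \geq \mathcal{G}(\partial M)$ to force equality of the two regular genera, and then invoke the classification result (Proposition \ref{prop:handlebody}) for $d=3$ to identify $M$ as a handlebody. No new combinatorial construction should be needed; all the technical work has already been done in the preceding lemmas.

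In more detail, set $n = \mathcal{G}(\partial M)$ and suppose $\mathit{k}(M) < 3(n+1)$. Since the boundary is connected, Lemma \ref{lemma:gem-genus} applies and yields $3\mathcal{G}(M) \leq \mathit{k}(M)$. Combining these two inequalities gives
\begin{equation*}
3\mathcal{G}(M) \leq \mathit{k}(M) < 3(n+1),
\end{equation*}
so $\mathcal{G}(M) < n+1$, i.e., $\mathcal{G}(M) \leq n = \mathcal{G}(\partial M)$. On the other hand, the cited results of \cite{bm87, cp90} (quoted in the introduction) give the reverse inequality $\mathcal{G}(M) \geq \mathcal{G}(\partial M)$. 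Hence $\mathcal{G}(M) = \mathcal{G}(\partial M) = n$.

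With the regular genera of $M$ and $\partial M$ now known to coincide, I would apply Proposition \ref{prop:handlebody} with $d = 3$ directly: that proposition asserts exactly that for a 3-manifold with connected boundary, $\mathcal{G}(M) = \mathcal{G}(\partial M)$ characterizes handlebodies. Therefore $M$ is a 3-dimensional handlebody of genus $n$, completing the proof.

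The argument is essentially a two-line deduction; the only potential subtlety is verifying that Lemma \ref{lemma:gem-genus} is applicable (which requires connected boundary, given in the hypothesis) and that Proposition \ref{prop:handlebody} is stated in the generality needed (both orientable and non-orientable cases), both of which hold as recorded in the preliminaries. Thus I do not anticipate any genuine obstacle — the real content of the theorem lies in Lemma \ref{lemma:gem-genus}, whose proof carried out the key computation relating $\rho(\Gamma)$ to the numbers $C_{03}, C_{13}, C_{23}$.
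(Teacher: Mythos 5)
Your overall strategy (force $\mathcal G(M)=\mathcal G(\partial M)$ and invoke Proposition \ref{prop:handlebody}) is the right one, but there is a genuine gap at the step ``$\mathcal G(M)<n+1$, i.e., $\mathcal G(M)\leq n$''. This tacitly assumes that $\mathcal G(M)$ (or at least $\mathcal G(M)-\mathcal G(\partial M)$) cannot take a value strictly between $n$ and $n+1$, and nothing in the paper provides this. The regular genus is defined as a minimum of $\rho_\varepsilon$ over \emph{all} representing gems, not just crystallizations, and in the non-bipartite (non-orientable) case $\rho_\varepsilon$ is \emph{half} the genus of a non-orientable surface, so half-integer values are possible a priori. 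The computation $\rho(\Gamma)=n+\min\{a,b,c\}$, which is what guarantees integrality of $\rho(\Gamma)-n$, is carried out in Lemma \ref{lemma:genus} only for $\partial$-contracted graphs (crystallizations), via Proposition \ref{prop:boundary} and Lemmas \ref{lemma:boundarygenus}, \ref{lemma:g-C}; it does not automatically transfer to whatever graph realizes the minimum defining $\mathcal G(M)$. Note that the conclusion really does hinge on this point: if, say, $\mathcal G(M)=n+\tfrac12$ were possible, your hypotheses would still allow $\mathit k(M)=3n+2<3(n+1)$, yet Proposition \ref{prop:handlebody} would then say $M$ is \emph{not} a handlebody.

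The gap is easily repaired by running your inequality on a minimal \emph{crystallization} rather than on the invariant $\mathcal G(M)$, which is exactly what the paper does. Take $(\Gamma,\gamma)$ with $p-1=\mathit k(M)\leq 3n+2$. By the proof of Lemma \ref{lemma:gem-genus} (using Lemmas \ref{lemma:boundarygenus} and \ref{lemma:g-C}), $p-1=3n+a+b+c$ with $a=C_{23}$, $b=C_{13}$, $c=C_{03}$ non-negative integers, so $a+b+c\leq 2$ and hence some $C_{i3}=0$, i.e., some $g_{jk}=1+n$; Lemma \ref{lemma:genus} then yields that $M$ is a handlebody. (Equivalently: $\rho(\Gamma)=n+\min\{a,b,c\}=n$, so $\mathcal G(M)\leq n$, and your appeal to $\mathcal G(M)\geq\mathcal G(\partial M)$ and Proposition \ref{prop:handlebody} finishes the argument.) With this adjustment your proof coincides with the paper's; as written, the integrality assertion is the missing ingredient.
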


\begin{proof}
Let $(\Gamma,\gamma)\in \mathbb{G}_3$ be a crystallization of the $3$-manifold $M$ such that $2p=|V(\Gamma)| < 8 +6\mathcal{G}(\partial M)$. From Corollary \ref{cor:gem-boundary},  we know that $2p \geq 6\mathcal{G}(\partial M)+2$ and we have a crystallization of a `handlebody $M$ with boundary surface $\#_n (\mathbb{S}^1\times \mathbb{S}^1 )$ or $\#_n (\TPSS)$' with exactly $2+6\mathcal{G}(\partial M)$ vertices. Now, we claim that if $2p=2+6\mathcal{G}(\partial M), 4+6\mathcal{G}(\partial M)$ or $6+6\mathcal{G}(\partial M)$, then  $M$ is a handlebody.

Let $n$ be the regular genus of the boundary surface $\partial M$. Then Lemma \ref{lemma:boundarygenus} implies, $2\overline{p}= 2+4n$.  It follows from Lemma \ref{lemma:g-C},  $g_{jk}=1+n+C_{i3}$ for $i,j,k \in \{0,1,2\}$. If $C_{i3} \geq 1$ for all $i\in \{0,1,2\}$ then $2p=2g_{01}+2g_{02}+2g_{12}-4\geq 8+6n=8+6\mathcal{G}(\partial M)$, which is a contradiction. Thus, at least one of $g_{01}$, $g_{02}$ and $g_{12}$  equals to $1+n$. It follows from Lemma \ref{lemma:genus} that  $M$ is a handlebody.
\end{proof}

\begin{corollary}\label{cor:gem-complexity}
Let $M$ be a connected compact $3$-manifold with connected boundary. Then
\begin{enumerate}[$(a)$]
\item If $M$ is a handlebody then $\mathit{k}(M) = 3\mathcal{G}(\partial M)$.
\item If $M$ is not a handlebody then $\mathit{k}(M) \geq 3(\mathcal{G}(\partial M)+1)$.
\end{enumerate}
\end{corollary}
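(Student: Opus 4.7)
The plan is to derive both parts as essentially immediate consequences of the results already assembled in the paper. For part $(b)$, I would simply invoke the contrapositive of Theorem \ref{theorem:upperbound}: the theorem asserts that $\mathit{k}(M) < 3(\mathcal{G}(\partial M) + 1)$ forces $M$ to be a handlebody, so whenever $M$ is \emph{not} a handlebody one must have $\mathit{k}(M) \geq 3(\mathcal{G}(\partial M) + 1)$. This is a one-line deduction.

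For part $(a)$, I would show matching upper and lower bounds. The upper bound $\mathit{k}(M) \leq 3\mathcal{G}(\partial M)$ is handed to us by the explicit construction in the example preceding Figure \ref{fig:1}: for each $n = \mathcal{G}(\partial M) \geq 1$ that figure exhibits a crystallization of the orientable (respectively non-orientable) genus-$n$ handlebody with exactly $2 + 6n$ vertices, and the trivial handlebody $D^3$ (the case $n=0$) admits a crystallization with $2$ vertices. In every case this gives $p - 1 \leq 3n$, i.e., $\mathit{k}(M) \leq 3\mathcal{G}(\partial M)$.

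For the reverse inequality I would apply Corollary \ref{cor:gem-boundary} with $h = 1$, which yields $\mathit{k}(M) \geq 3\mathcal{G}(\partial M)$ directly, without even needing Proposition \ref{prop:handlebody}. Combining the two bounds gives $\mathit{k}(M) = 3\mathcal{G}(\partial M)$, completing part $(a)$.

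There is really no obstacle here: the technical content has already been absorbed into Theorem \ref{theorem:gem-genus}, Theorem \ref{theorem:upperbound}, and the handlebody construction in Figure \ref{fig:1}. The only point requiring a moment of care is that the statement should cover the spherical-boundary case $\partial M = \mathbb{S}^2$ (so that $M = D^3$ qualifies as a ``trivial'' handlebody in the sense introduced just before Theorem \ref{theorem:upperbound}); once we fix that convention, the formula $\mathit{k}(D^3) = 0 = 3\mathcal{G}(\mathbb{S}^2)$ fits into part $(a)$ as the $n = 0$ boundary case.
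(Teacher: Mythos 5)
Your proposal is correct and follows essentially the same route as the paper: part $(b)$ is the contrapositive of Theorem \ref{theorem:upperbound}, and part $(a)$ combines the lower bound from Corollary \ref{cor:gem-boundary} (equivalently Lemma \ref{lemma:gem-genus}, since $\mathcal{G}(M)\geq\mathcal{G}(\partial M)$) with the $(2+6n)$-vertex crystallizations of Figure \ref{fig:1}, exactly as in the paper's example on handlebodies. Your observation that Proposition \ref{prop:handlebody} is needed only to certify that the constructed graphs represent handlebodies, not for the lower bound, is accurate and consistent with the paper's usage.
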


\begin{remark} \label{remark:eq}
{\rm Let $(\Gamma_1,\gamma_1)$ be a crystallization of the orientable or non-orientable handlebody $M_1$ with $2+6\mathcal{G}(\partial M)$ vertices as constructed in Figure \ref{fig:1}. Let $(\Gamma_2,\gamma_2)$ be the unique 8-vertex crystallization of $\mathbb{S}^2\times \mathbb{S}^1$ or $\TPSP$ or $\mathbb{RP}^3$ (cf. \cite[Figure 2]{bd14}). Let $v_1\in V(\Gamma_1)$ be an interior vertex and $v_2\in V(\Gamma_2)$. Then the graph connected sum $(\Gamma,\gamma)=(\Gamma_1\#_{v_1v_2}\Gamma_2,\gamma_1\#_{v_1v_2}\gamma_2)$ is a crystallization of a connected compact  $3$-manifold $M$ with boundary such that $M$ is not a handlebody and $\partial M=\partial M_1$. Here $|V(\Gamma)|=6\mathcal{G}(\partial M)+2+8-2=6\mathcal{G}(\partial M)+8$. Thus, $\mathit{k}(M) =3(\mathcal{G}(\partial M)+1)$.
}
 \end{remark}

A $1$-dipole of color $j \in \Delta_d$ of a $(d+1)$-colored graph (possibly with boundary) $(\Gamma,\gamma) \in \mathbb{G}_d$ is a subgraph $\theta$ of $\Gamma$ consisting of two vertices $x,y$ joined by color $j$ such that 
$\Gamma_{\hat{j}} (x) \neq \Gamma_ {\hat{j}} (y)$, where $\Gamma_ {\hat{j}} (u)$ denotes the component of $\Gamma_ {\hat{j}}$ containing $u$. The cancellation of $1$-dipole from $\Gamma$ consists of two steps: first deleting $\theta$ from $\Gamma$ and second welding the same colored hanging edges (see \cite{fg82c} for more details).

\begin{corollary}\label{cor:gen-h-bdry}
Let $M$ be a $3$-manifold with $h$ boundary components such that $\mathit k (M)< 3 (\mathcal{G} (\partial M)+h)$. Let $(\Gamma,\gamma)\in \mathbb{G}_3$ be a crystallization of $M$ such that $\mathit k (M)\leq |V(\Gamma)|/2 -1 < 3 (\mathcal{G} (\partial M)+h)$. Then the new graph   $(\Gamma^\prime,\gamma^\prime) \in \mathbb{G}_3$ after cancelling all possible $1$-dipoles from $(\Gamma,\gamma)$, represents a handlebody.
\end{corollary}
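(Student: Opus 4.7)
My plan is to leverage the PL-invariance of 1-dipole cancellation (\cite{fg82c}) and reduce to Theorem~\ref{theorem:upperbound}. First, for $h=1$, the $\partial$-contractedness of $(\Gamma,\gamma)$ forces each $\Gamma_{\hat c}$ to be connected, so no 1-dipole exists, $(\Gamma',\gamma')=(\Gamma,\gamma)$, and Theorem~\ref{theorem:upperbound} yields the claim directly. I may therefore assume $h\geq 2$.

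For $h\geq 2$, each 1-dipole cancellation preserves the PL-homeomorphism type of $M$ while strictly decreasing $|V|$ by $2$, so $(\Gamma',\gamma')$ represents the same $M$ with the same $h$ and $\mathcal{G}(\partial M)$, and $|V(\Gamma')|\leq|V(\Gamma)|<6(\mathcal{G}(\partial M)+h)+2$. Next, I apply the bridging construction from the proof of Theorem~\ref{theorem:gem-genus}: adding $h-1$ color-$3$ edges between distinct boundary components of $\partial\Gamma'$ produces a crystallization $(\widetilde{\Gamma}',\widetilde{\gamma}')\in\mathbb{G}_3$ of a $3$-manifold $\widetilde M$ with connected boundary, with $|V(\widetilde{\Gamma}')|=|V(\Gamma')|$ and $\rho(\widetilde{\Gamma}')=\rho(\Gamma')+h-1$. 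Using the multi-boundary-valid identity $|V|=2+6\mathcal{G}(\partial M)+2(C_{03}+C_{13}+C_{23})$ (which follows from Proposition~\ref{prop:boundary} together with Lemma~\ref{lemma:g-C}, both generalizing verbatim from $h=1$ to arbitrary $h$) combined with an Euler-characteristic computation for the bridging, I would verify $|V(\widetilde{\Gamma}')|<8+6\mathcal{G}(\partial\widetilde M)$, so that Theorem~\ref{theorem:upperbound} applies and $\widetilde M$ is a handlebody.

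Lemma~\ref{lemma:genus} then yields the minimality $g'_{ij}=1+\mathcal{G}(\partial\widetilde M)$ for some $\{i,j\}\subset\{0,1,2\}$ in $(\widetilde{\Gamma}',\widetilde{\gamma}')$. Since the bridging edges have color $3$, the counts $g'_{ij}$ with $i,j\in\{0,1,2\}$ are identical in $(\Gamma',\gamma')$ and $(\widetilde{\Gamma}',\widetilde{\gamma}')$, so the same minimality holds in $(\Gamma',\gamma')$, forcing it to represent a handlebody by (the proof of) Lemma~\ref{lemma:genus}.

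The main obstacle I expect is the Euler-characteristic bookkeeping in the middle step: I must track precisely how the $h-1$ bridging edges modify $\mathcal{G}(\partial\widetilde M)$ (each edge either merges two boundary components without adding genus or attaches a handle within a single component, adding $1$), and confirm that the combination of exhaustive 1-dipole cancellation on $(\Gamma,\gamma)$ and the bridging jointly brings $|V(\widetilde{\Gamma}')|$ strictly below the $8+6\mathcal{G}(\partial\widetilde M)$ threshold activating Theorem~\ref{theorem:upperbound}.
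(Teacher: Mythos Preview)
Your proposal rests on a false premise: the claim that ``each 1-dipole cancellation preserves the PL-homeomorphism type of $M$'' does not hold in the boundary setting at hand. The result of \cite{fg82c} is for closed manifolds; for graphs in $\mathbb{G}_3$ with boundary vertices, cancelling a $1$-dipole of color $c\in\{0,1,2\}$ can and generally does change the represented manifold. In fact, once all $1$-dipoles have been cancelled, $\Gamma'_{\hat c}$ is connected for every $c$ (any color-$c$ edge joining distinct components of $\Gamma'_{\hat c}$ would itself be a $1$-dipole), so $(\Gamma',\gamma')$ is contracted and represents a manifold with \emph{connected} boundary. For $h\ge 2$ this manifold cannot be $M$. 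The Remark immediately following the Corollary makes this explicit: $\mathbb{RP}^2\times[0,1]$ satisfies the hypothesis, yet after cancelling $1$-dipoles one obtains a graph representing a handlebody, which is certainly not $\mathbb{RP}^2\times[0,1]$.

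Because of this, your bridging construction and the final transfer via Lemma~\ref{lemma:genus} are aimed at the wrong object; they would, if they worked, show that $M$ itself is a handlebody, which is false in general. The paper's argument is both different and much shorter: starting from the $\partial$-contracted $(\Gamma,\gamma)$, one cancels $h-1$ $1$-dipoles of each color $c\in\{0,1,2\}$, obtaining a contracted $(\Gamma',\gamma')$ with $|V(\Gamma')|=2p-6(h-1)$ that represents some new manifold $M'$ with connected boundary and $\mathcal{G}(\partial M')=\mathcal{G}(\partial M)$. The hypothesis $2p<6\mathcal{G}(\partial M)+6h+2$ then reads $|V(\Gamma')|<6\mathcal{G}(\partial M')+8$, and Theorem~\ref{theorem:upperbound} applied to $M'$ (not to $M$) yields directly that $(\Gamma',\gamma')$ represents a handlebody.
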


\begin{proof}
Let $(\Gamma,\gamma)\in \mathbb{G}_3$ be a crystallization of $M$ such that 
$|V(\Gamma)|=2p < 6\mathcal{G}(\partial M)+6h+2$. It follows from Corollary \ref{cor:gem-boundary} that $2p \geq 6\mathcal{G}(\partial M)+6h-4$.
Let $(\Gamma^\prime,\gamma^\prime)\in \mathbb{G}_3 $ be the new crystallization after cancelling $(h-1)$ number of $1$-dipoles of  color $j$ from $(\Gamma,\gamma)$, for $0\leq j \leq 2$. Then, $|V(\Gamma^\prime)|=2p-6(h-1)$.
Let $M^\prime$ denote the manifold with connected boundary with crystallization $(\Gamma^\prime,\gamma^\prime)$. Then $\mathcal{G} (\partial M)=\mathcal{G} (\partial M^\prime)$.
Further,  $$6\mathcal{G} (\partial M)+6h-4 \leq 2p < 6\mathcal{G} (\partial M)+6h+2$$
$$\Rightarrow 6\mathcal{G} (\partial (M^\prime))+2 \leq |V(\Gamma^\prime)|< 6\mathcal{G} (\partial M)+8.$$
Then Theorem \ref{theorem:upperbound} implies $M^\prime$ is a handlebody.
\end{proof}

\begin{remark} 
{\rm
Let $M$ be a $3$-manifold with $h$ boundary components as in Corollary \ref{cor:gen-h-bdry}. Then $M$ need not be the connected sum of handlebodies. For an example, we have a crystallization for $3$-manifold $\mathbb{RP}^2\times [0,1]$ in Figure \ref{fig:3} which satisfies the hypothesis of Corollary \ref{cor:gen-h-bdry} but is not a connected sum of handlebodies.}
\end{remark}

\begin{corollary}
Let  $S$ be a closed connected surface. Then $$6 \mathcal{G}(S)+3 \leq \mathit{k}(S\times [0,1]) \leq 8 \mathcal{G}(S)+3.$$
\end{corollary}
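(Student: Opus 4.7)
I proceed by proving the two inequalities separately.

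For the lower bound, the manifold $M = S \times [0,1]$ is a connected compact $3$-manifold with exactly $h = 2$ boundary components, both homeomorphic to $S$. Using the convention $\mathcal{G}(\partial M) = \sum_{i=1}^{h} \mathcal{G}(\partial^i M)$ introduced in Subsection \ref{sec:genus}, we have $\mathcal{G}(\partial M) = 2\mathcal{G}(S)$. A direct application of Corollary \ref{cor:gem-boundary} then gives
$$\mathit{k}(S \times [0,1]) \geq 3(\mathcal{G}(\partial M) + h - 1) = 3(2\mathcal{G}(S) + 1) = 6\mathcal{G}(S) + 3.$$

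For the upper bound, I plan to exhibit an explicit crystallization $(\Gamma, \gamma) \in \mathbb{G}_3$ of $S \times [0,1]$ having $2p = 16n + 8$ vertices, where $n = \mathcal{G}(S)$. Starting from the handlebody crystallization $(\Gamma_H, \gamma_H)$ of Figure \ref{fig:1}, which has $6n + 2$ vertices and represents a genus-$n$ handlebody $H$ with $\partial H \cong S$ (orientable or non-orientable according to $S$), I would take two disjoint copies, contributing $12n + 4$ vertices, and join them by a combinatorial bridge on $4n + 4$ additional vertices with a carefully chosen pattern of color-$0,1,2,3$ edges. The bridge should be designed so that the resulting graph is connected and $\partial$-contracted with $\partial \Gamma$ having exactly two components, each isomorphic to a crystallization of $S$; the Gagliardi cycle-count identities of Proposition \ref{prop:boundary} are satisfied; and the associated simplicial cell complex $\mathcal{K}(\Gamma)$ is PL-homeomorphic to $S \times [0,1]$. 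The total count $2p = (12n + 4) + (4n + 4) = 16n + 8$ then yields $\mathit{k}(S \times [0,1]) \leq 8n + 3$.

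The main obstacle is verifying that the construction yields $S \times [0,1]$ itself and not some other compact $3$-manifold with boundary $S \sqcup S$. As a cautionary example, the graph connected sum $\Gamma_H^{(1)} \#_{v_1 v_2} \Gamma_H^{(2)}$ at two interior vertices gives a crystallization of the interior connected sum $H \# H$, which for $n \geq 1$ is \emph{not} homeomorphic to $S \times [0,1]$: one has $\pi_1(H \# H) \cong \pi_1(S) * \pi_1(S)$, while $\pi_1(S \times [0,1]) \cong \pi_1(S)$. The correct bridge must encode the trivial cobordism from $S$ to itself. I expect to design it by refining the pattern of Figure \ref{fig:1} (treating the handlebody blocks as "upper" and "lower" cappings of the product) and to verify correctness either by a direct combinatorial identification of $\mathcal{K}(\Gamma)$ with a prism triangulation of $S \times [0,1]$, or by first checking the small cases $n = 0, 1$ (where $\mathbb{S}^2 \times [0,1]$ admits an $8$-vertex crystallization and the pattern for the torus or Klein bottle can be drawn explicitly) and then extending by induction on $n$ using the same block structure as in Figure \ref{fig:1}.
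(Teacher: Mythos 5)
Your lower bound argument is correct and is essentially the paper's: apply Corollary \ref{cor:gem-boundary} (equivalently Theorem \ref{theorem:gem-genus} together with $\mathcal{G}(M)\geq\mathcal{G}(\partial M)$) with $h=2$ and $\mathcal{G}(\partial M)=2\mathcal{G}(S)$ to get $\mathit{k}(S\times[0,1])\geq 6\mathcal{G}(S)+3$. The problem is the upper bound, where you have a plan rather than a proof. The inequality $\mathit{k}(S\times[0,1])\leq 8\mathcal{G}(S)+3$ requires an explicit crystallization of $S\times[0,1]$ with $16\mathcal{G}(S)+8$ vertices, and you never produce one: you only state the properties the hypothetical ``bridge'' between two copies of the handlebody gem of Figure \ref{fig:1} should satisfy ($\partial$-contractedness, the identities of Proposition \ref{prop:boundary}, and $|\mathcal{K}(\Gamma)|\cong S\times[0,1]$), and you yourself flag the identification of the carrier as the main unresolved obstacle. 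That identification is precisely the mathematical content of the upper bound, so as it stands the second inequality is unproved; moreover it is not clear that a bridge on exactly $4\mathcal{G}(S)+4$ vertices with the required properties exists, so even the announced vertex count is speculative. Your cautionary remark that the graph connected sum of the two handlebody gems yields $H\# H$ rather than $S\times[0,1]$ is correct and shows you see the difficulty, but seeing the difficulty does not remove it.

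For comparison, the paper avoids gluing handlebodies altogether and builds the gem of $S\times[0,1]$ directly from a crystallization $(\Gamma,\gamma)$ of $S$ with $2p=4\mathcal{G}(S)+2$ vertices: it takes four $2$-colored copies $G_1(0,1,-)$, $G_2(-,1,3)$, $G_3(2,-,3)$, $G_4(2,0,-)$ of $\Gamma$ (each copy keeping two of the colors and dropping one) and joins consecutive copies by perfect matchings in the missing colors ($2$ between $G_1$ and $G_2$, $0$ between $G_2$ and $G_3$, $1$ between $G_3$ and $G_4$), obtaining a $4$-colored graph with $8p=16\mathcal{G}(S)+8$ vertices whose carrier is the product $S\times[0,1]$ (Figure \ref{fig:3} illustrates the case $S=\mathbb{RP}^2$); the two extreme layers $G_1$ and $G_4$ carry the two boundary components. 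This layered construction realizes the trivial cobordism by design, which is exactly the step your bridge proposal leaves open. To repair your write-up you would either have to carry out such a product-type construction explicitly or exhibit and verify your bridge; checking $n=0,1$ and ``extending by induction'' is not sufficient without specifying the inductive block and proving it preserves the PL type of the product.
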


\begin{proof}
Let  $(\bar \Gamma,\bar \gamma)$ be a crystallization of $M=S\times [0,1]$. Here $\partial M$ has exactly two components. Therefore, by Theorem \ref{theorem:gem-genus}, we have $\mathit{k}(S\times [0,1]) \geq 3(\mathcal{G}(M)+1) \geq  6\mathcal{G}(S)+3$. On the other hand, it is easy to construct a crystallization of $M=S\times [0,1]$ with $16 \, \mathcal{G}(S) +8$ vertices by the following procedure:

Let  $(\Gamma,\gamma)$ be a crystallization of $S$ with color set $\Delta_2=\{0,1,2\}$. Let $2p$ be the number of vertices of $\Gamma$. Then $2p=4\mathcal{G}(S)+2$. Let $v_1,v_2,\dots,v_{2p}$ be the vertices of $\Gamma$. Now, choose a fixed order of the colors say, $(0,1,2)$. For $1\leq m\leq 4$, let $G_m(i,j,k)$ be the graph obtained from $(\Gamma,\gamma)$ by replacing vertices $v_l$ by $v_l^{(m)}$ and by replacing the triplet of colors $(0,1,2)$ by $(i,j,k)$, where $1\leq l \leq 2p$ and $0\leq i \neq j\neq k \leq 3$. Now consider the four 2-colored graphs $G_1(0,1,-)$, $G_2(-,1,3)$, $G_3(2,-,3)$ and $G_4(2,0,-)$, where by `$-$', we mean the corresponding color is missing. Let $(\bar \Gamma,\bar \gamma)$ be a graph obtained by 
$(i)$ adding $2p$ edges of color $2$ between $v_l^{(1)}$ of $G_1(0,1,-)$ and $v_l^{(2)}$ of $G_2(-,1,3)$, for $1\leq l \leq 2p$, $(ii)$  adding $2p$ edges of color $0$ between $v_l^{(2)}$ of $G_2(0,1,-)$ and $v_l^{(3)}$ of $G_3(-,1,3)$, for $1\leq l \leq 2p$, and  $(iii)$  adding $2p$ edges of color $1$ between $v_l^{(3)}$ of $G_3(0,1,-)$ and $v_l^{(4)}$ of $G_4(-,1,3)$, for $1\leq l \leq 2p$.

Then $(\bar \Gamma,\bar \gamma)$ is a crystallization (cf. Figure \ref{fig:3} for $S=\mathbb{RP}^2$) of $S\times [0,1]$ with $8p=16 \, \mathcal{G}(S) +8$ vertices.  Therefore, $\mathit{k}(S\times [0,1]) \leq 8 \, \mathcal{G}(S) +3$.
\end{proof}

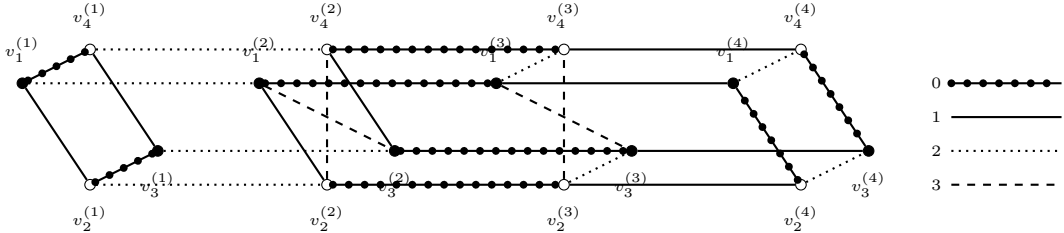
\begin{figure}[ht]
\tikzstyle{vert}=[circle, draw, fill=black!100, inner sep=0pt, minimum width=4pt] \tikzstyle{vertex}=[circle,
draw, fill=black!00, inner sep=0pt, minimum width=4pt] \tikzstyle{ver}=[] \tikzstyle{extra}=[circle, draw,
fill=black!50, inner sep=0pt, minimum width=2pt] \tikzstyle{edge} = [draw,thick,-] \centering
\begin{tikzpicture}[scale=0.9]

\begin{scope}[shift={(12.5,3.5)}]
\node[ver] (3) at (1,-4.5){\tiny{$3$}}; 
\node[ver] (2) at (1,-4){\tiny{$2$}};
\node[ver](1) at (1,-3.5){\tiny{$1$}}; 
\node[ver](0) at (1,-3){\tiny{$0$}}; 
\node[ver] (8) at (3,-4.5){}; 
\node[ver](7) at (3,-4){}; 
\node[ver](6) at (3,-3.5){}; 
\node[ver] (5) at (3,-3){};
\path[edge] (0) -- (5);
\draw [line width=3pt, line cap=round, dash pattern=on 0pt off 2\pgflinewidth] (0) -- (5);
\path[edge] (1) -- (6);
\path[edge,dotted] (2) -- (7);
\path[edge,dashed] (3) -- (8);

\end{scope}

\begin{scope}[shift={(0,0)}]
\foreach \x/\y/\z/\w/\u in {0/0.5/0/1/v_1^{(1)}, 2/-0.5/2/-1/v_3^{(1)}}
{\node[ver]() at (\z,\w){\tiny{$\u$}};
\node[vert] (\u) at (\x,\y){};} 

\foreach \x/\y/\z/\w/\u  in {1/-1/1/-1.5/v_2^{(1)}, 1/1/1/1.5/v_4^{(1)}}
{\node[ver]() at (\z,\w){\tiny{$\u$}};
\node[vertex] (\u) at (\x,\y){};} 
\end{scope}

\begin{scope}[shift={(3.5,0)}]
\foreach \x/\y/\z/\w/\u in {0/0.5/0/1/v_1^{(2)}, 2/-0.5/2/-1/v_3^{(2)}}
{\node[ver]() at (\z,\w){\tiny{$\u$}};
\node[vert] (\u) at (\x,\y){};} 

\foreach \x/\y/\z/\w/\u  in {1/-1/1/-1.5/v_2^{(2)}, 1/1/1/1.5/v_4^{(2)}}
{\node[ver]() at (\z,\w){\tiny{$\u$}};
\node[vertex] (\u) at (\x,\y){};} 
\end{scope}

\begin{scope}[shift={(7,0)}]
\foreach \x/\y/\z/\w/\u in {0/0.5/0/1/v_1^{(3)}, 2/-0.5/2/-1/v_3^{(3)}}
{\node[ver]() at (\z,\w){\tiny{$\u$}};
\node[vert] (\u) at (\x,\y){};} 

\foreach \x/\y/\z/\w/\u  in {1/-1/1/-1.5/v_2^{(3)}, 1/1/1/1.5/v_4^{(3)}}
{\node[ver]() at (\z,\w){\tiny{$\u$}};
\node[vertex] (\u) at (\x,\y){};} 
\end{scope}

\begin{scope}[shift={(10.5,0)}]
\foreach \x/\y/\z/\w/\u in {0/0.5/0/1/v_1^{(4)}, 2/-0.5/2/-1/v_3^{(4)}}
{\node[ver]() at (\z,\w){\tiny{$\u$}};
\node[vert] (\u) at (\x,\y){};} 

\foreach \x/\y/\z/\w/\u  in {1/-1/1/-1.5/v_2^{(4)}, 1/1/1/1.5/v_4^{(4)}}
{\node[ver]() at (\z,\w){\tiny{$\u$}};
\node[vertex] (\u) at (\x,\y){};} 
\end{scope}

\foreach \x/\y in {v_1^{(1)}/v_2^{(1)},v_2^{(1)}/v_3^{(1)},v_3^{(1)}/v_4^{(1)},v_4^{(1)}/v_1^{(1)},v_4^{(4)}/v_3^{(4)},v_2^{(4)}/v_1^{(4)},v_4^{(2)}/v_4^{(3)},v_3^{(2)}/v_3^{(3)},v_2^{(2)}/v_2^{(3)},v_1^{(2)}/v_1^{(3)},v_4^{(4)}/v_4^{(3)},v_3^{(4)}/v_3^{(3)},v_2^{(4)}/v_2^{(3)},v_1^{(4)}/v_1^{(3)},v_1^{(2)}/v_2^{(2)},v_4^{(2)}/v_3^{(2)}}
{\path[edge] (\x) -- (\y);}

\foreach \x/\y in {v_2^{(1)}/v_3^{(1)},v_4^{(1)}/v_1^{(1)},v_4^{(4)}/v_3^{(4)},v_2^{(4)}/v_1^{(4)},v_4^{(2)}/v_4^{(3)},v_3^{(2)}/v_3^{(3)},v_2^{(2)}/v_2^{(3)},v_1^{(2)}/v_1^{(3)}}
{\draw[line width=3pt, line cap=round, dash pattern=on 0pt off 2\pgflinewidth]  (\x) -- (\y);}

\foreach \x/\y in {v_2^{(2)}/v_4^{(2)},v_1^{(2)}/v_3^{(2)},v_2^{(3)}/v_4^{(3)},v_1^{(3)}/v_3^{(3)}}
{\path[edge,dashed] (\x) -- (\y);}

\foreach \x/\y in {v_1^{(1)}/v_1^{(2)},v_2^{(1)}/v_2^{(2)},v_3^{(1)}/v_3^{(2)},v_4^{(1)}/v_4^{(2)},v_1^{(3)}/v_4^{(3)},v_2^{(3)}/v_3^{(3)},v_1^{(4)}/v_4^{(4)},v_2^{(4)}/v_3^{(4)}}
{\path[edge, dotted] (\x) -- (\y);}

\end{tikzpicture}
\caption{A crystallization of $\mathbb{RP}^2\times [0,1]$.}\label{fig:3}
\end{figure}

\begin{remark} \label{remark:sharp}
{\rm Let $\mathbb{D}^3$ denote the $3$-ball, and $\mathbb{H}^3_g, \tilde {\mathbb{H}}^3_g$ denote the orientable and non-orientable handlebody of genus $g$. Then our bounds $\mathit k (M) \geq 3 (\mathcal{G} (M)+h-1)$ and $\mathit k (M) \geq 3 (\mathcal{G} (\partial M)+h-1)$ both are sharp for the compact $3$-manifolds $(\#_{h_1} \mathbb{H}^3_{g_i})\#(\#_{h_2} \tilde{\mathbb{H}}^3_{\tilde{g}_j})$, where $h_1,h_2\geq 0$, $h_1+h_2=h$, $g_i,\tilde{g}_j \geq 0$. Further, our bounds $\mathit k (M) \geq 3 (\mathcal{G} (M)+h-1)$ is sharp for the compact $3$-manifolds $M\#(\#_h \mathbb{D}^3)$, $M\#(\#_h \mathbb{H}^3_{g_i}), M\#(\#_h \tilde{\mathbb{H}}^3_{\tilde{g}_j})$ where $M=\mathbb{RP}^3,\mathbb{S}^2\times\mathbb{S}^1,\TPSP$. In Corollary \ref{cor:gem-complexity}, we prove that  if $M$ has connected boundary and is not a handlebody then $\mathit{k}(M) \geq 3(\mathcal{G}(\partial M)+1)$. This bound is also sharp for $M\#\mathbb{D}^3$, $M\#\mathbb{H}^3_{g}, M\# \tilde{\mathbb{H}}^3_{g}$ where $M=\mathbb{RP}^3,\mathbb{S}^2\times\mathbb{S}^1,\TPSP$.
}
 \end{remark}

\bigskip

\noindent {\bf Acknowledgement:} %The authors would like to thank the anonymous referees for many useful comments and suggestions.
The first author is supported by DST INSPIRE Faculty Research Grant (DST/INSPIRE/04/2017/002471).

{\footnotesize

\end{document}